\newtheorem{lemma}{Lemma}
\newtheorem{lemmaNOTcounted}{Lemma}
\newtheorem{corollaryQ}{Corollary}
\newcommand{\corollaryQold}{}\let\corollaryQold=\thecorollaryQ
\def\thecorollaryQ{Q\corollaryQold}
\newtheorem{corollaryX}{Corollary}
\newcommand{\corollaryXold}{}\let\corollaryXold=\thecorollaryX
\def\thecorollaryX{X\corollaryXold}
\newtheorem{theorem}[lemma]{Theorem}
\newtheorem{proposition}[lemma]{Proposition}
\newtheorem{problem}{Problem}
\begin{document}
\title
[Generalized Combinatorial 
Nullstellensatz Lemma]
{The Generalized  Combinatorial Laso\'n--Alon--Zippel--Schwartz Nullstellensatz
  Lemma}
\author{G\"unter Rote}
\begin{abstract}
  We survey
a few 
  strengthenings and 
  generalizations of
the Combinatorial Nullstellensatz of Alon
and
the Schwartz--Zippel Lemma.
These lemmas guarantee the existence of (a certain number of) nonzeros
of a multivariate polynomial
when the variables run independently through sufficiently large ranges.
\end{abstract}

\maketitle

\tableofcontents
\section{Introduction}

\subsection{The Quantitative and the Existence Conclusion}

Consider  a polynomial $f\in K[x_1,\ldots,x_n]$ in $n$ variables over a
field or integral domain $K$, and let $S_1,\dots,S_n$ be subsets of
$K$.
We want to make statements about the nonzeros of $f(x_1,\dots,x_n)$
when the variables $x_i$ run independently over the sets $S_i$, under the assumption that
these sets are sufficiently large, compared to certain parameters
$d_1,\dots,d_n$ that are related to the degrees of the terms in~$f$.
We may then derive a mere conclusion about the \emph{existence} of a nonzero
or a stronger statement about the \emph{number} of nonzeros:

\begin{quote}
  \textsc{The Quantitative Conclusion.}
  If $|S_i|> d_i$ for all $i=1,\ldots,n$, then
the number of
tuples $(x_1,\dots,x_n) \in
S_1\times S_2 \times \dots \times S_n$
 such that
$f(x_1,\dots,x_n) \ne 0$ is
  \emph{at least}
%
%
  \begin{multline}\label{bound-equal}
    \qquad 
(|S_1|-d_1)
\cdot
(|S_2|-d_2)
\cdots
(|S_n|-d_n)
\\= 
|S_1\times S_2 \times \dots \times S_n| \cdot
\bigl(1-\tfrac{d_1}{|S_1|}\bigr)
\bigl(1-\tfrac{d_2}{|S_2|}\bigr)
\cdots
\bigl(1-\tfrac{d_n}{|S_n|}\bigr).
\quad
\end{multline}
\end{quote}
The product in the right half
of the last line 
can be interpreted as
a lower bound on the \emph{probability} of getting a nonzero.

Since the product of the terms $|S_i|-d_i$ is positive, an immediate
consequence is
\begin{quote}
  \textsc{The Existence Conclusion.}
If $|S_i|> d_i$ for all $i=1,\ldots,n$, then {there exists}
a tuple of values $(x_1,\dots,x_n) \in S_1\times S_2 \times \dots \times
S_n$ such that
$f(x_1,\dots,x_n) \ne 0$.
\end{quote}

\subsection{Assumptions on the numbers \texorpdfstring{$d_i$}{d(i)}}
\label{sec:assump}

These conclusions hold under a variety of different
\emph{assumptions} about the parameters $d_1,\dots,d_n$.

To describe these parameters,
we recall a few standard definitions.
A \emph{monomial}
is a product  $x_1^{a_1}x_2^{a_2}\ldots x_n^{a_n}$ of  powers of variables $x_i$ (not
including a coefficient from $K$).
The degree of the monomial \emph{in the variable $x_i$} is the
exponent $a_i$,
and the \emph{total degree} is the sum
 $a_1+\cdots+a_n$
of these exponents.
The \emph{monomials of a polynomial} $f$
 are the monomials that have
nonzero coefficients when the polynomial is written out in expanded form as a linear
combination of monomials. 

The (partial) degree of a polynomial $f$ in the variable $x_i$ 
(or the degree of $x_i$ in $f$) is the largest exponent $a_i$ for
which
$x_i^{a_i}$ appears as a factor of a monomial of~$f$.
%
The total degree of a polynomial is the largest total degree of
any of its monomials. This is what is usually called \emph{the degree}
of the polynomial without further qualification.

A monomial of $f$ is \emph{maximal} if it does not divide another
monomial of~$f$, see Figure~\ref{fig-maximal}.

{
\def\thelemmaNOTcounted{X}

\begin{lemmaNOTcounted}
  [
  Generalized Combinatorial Nullstellensatz,
  Laso\'n 2010~{\cite[Theorem 2]{l-gcn-10}},
  Tao and Vu 2006~{\cite[Exercise~9.1.4, p.~332]{tv-ac-06}}
  ]
\label{stark}
\label{main-existence}
If $x_1^{d_1}x_2^{d_2}\ldots x_n^{d_n}$ is a maximal
monomial of $f$, then the Existence Conclusion holds.
\end{lemmaNOTcounted}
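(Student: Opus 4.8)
The plan is to follow Alon's classical reduction argument but to keep track of the coefficient of the distinguished monomial $m:=x_1^{d_1}x_2^{d_2}\cdots x_n^{d_n}$ all the way through, and to use the maximality hypothesis only at the very last moment. Assume for contradiction that $f$ vanishes at every point of $S_1\times\cdots\times S_n$. We may assume each $S_i$ is finite with $|S_i|=d_i+1$ (replace $S_i$ by a subset of that size; this only makes the Existence Conclusion harder to establish). For each $i$ put $g_i(x_i):=\prod_{s\in S_i}(x_i-s)$, a monic polynomial of degree $|S_i|$ vanishing on $S_i$.

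First I would carry out the \emph{reduction} of $f$ modulo $g_1,\dots,g_n$: whenever a monomial of the current polynomial contains $x_i$ with exponent $a_i\ge|S_i|$, rewrite that monomial using $x_i^{a_i}=x_i^{a_i-|S_i|}\bigl(g_i(x_i)+(x_i^{|S_i|}-g_i(x_i))\bigr)$ and discard the multiple of $g_i$, thereby lowering the exponent of $x_i$ while leaving the exponents of the other variables untouched. This process terminates and produces a polynomial $\bar f$ with $\deg_{x_i}\bar f<|S_i|$ for every $i$, with $\bar f-f\in(g_1,\dots,g_n)$; hence $\bar f$ and $f$ agree on the grid $S_1\times\cdots\times S_n$, so $\bar f$ also vanishes there. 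Now I would invoke the elementary ``grid lemma'': a polynomial all of whose partial degrees obey $\deg_{x_i}<|S_i|$ and which vanishes on all of $S_1\times\cdots\times S_n$ is the zero polynomial (induction on $n$, using that a nonzero univariate polynomial over an integral domain of degree $<k$ has fewer than $k$ roots). Therefore $\bar f=0$; in particular the coefficient of $m$ in $\bar f$ is $0$.

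The crux --- and the only place the hypothesis enters --- is to show that the coefficient of $m$ in $\bar f$ is exactly the coefficient of $m$ in $f$, which is nonzero by assumption. The key observation is that each rewriting step replaces a monomial $m'$ by a $K$-linear combination of monomials each of which \emph{divides} $m'$ (one exponent strictly drops, the rest are unchanged). Iterating and tracking coefficients over the finitely many steps, the coefficient of any monomial $\mu$ with $\deg_{x_i}\mu<|S_i|$ in $\bar f$ is a $K$-linear combination of coefficients of monomials $m'$ of $f$ with $\mu\mid m'$. Apply this with $\mu=m$: since $\deg_{x_i}m=d_i<|S_i|$, $m$ is eligible, and since $m$ is a \emph{maximal} monomial of $f$, the only monomial $m'$ of $f$ with $m\mid m'$ is $m$ itself. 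Moreover $m$, being already reduced, is fixed by the rewriting process and so contributes its own coefficient with weight $1$. Hence the coefficient of $m$ in $\bar f$ equals that in $f$, which is nonzero, contradicting $\bar f=0$. This contradiction proves that $f$ cannot vanish identically on $S_1\times\cdots\times S_n$, which is the Existence Conclusion.

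I expect the main obstacle to be the bookkeeping in the crux paragraph: formulating and proving by induction on the number of rewriting steps the statement that the coefficient of a reduced monomial in $\bar f$ can only pick up contributions from monomials of $f$ that it divides, while simultaneously verifying that the reduction terminates and genuinely forces all partial degrees below $|S_i|$. Everything else --- choosing the $g_i$, passing to $\bar f$, and the grid lemma --- is routine and already present in Alon's proof of the original Combinatorial Nullstellensatz; the single new ingredient is that ``$m$ has maximum total degree'' is replaced by ``$m$ is maximal under divisibility'', and the divisibility bookkeeping above is precisely tailored to that weaker hypothesis.
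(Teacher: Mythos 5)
Your proof is correct, and it is essentially the paper's own proof by trimming in Section~\ref{sec:trimming}: your reduction modulo the $g_i$ is exactly Proposition~\ref{proposition-trimming}, your divisibility bookkeeping is its Property~\ref{untouched}, and your ``grid lemma'' is just the contrapositive form of applying Corollary~\ref{generalized-demillo-lipton-zippel} (or Lemma~\ref{main-quantitative}) to the trimmed polynomial. Be aware that the paper's primary proof (Section~\ref{sec:stark}, following Micha\l ek and Laso\'n) is a shorter induction on $d_1+\cdots+d_n$ via division by a single linear factor $x_1-a$, but nothing in your argument needs repair.
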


}

The
\emph{lexicographically largest}
 monomial
 $x_1^{a_1}x_2^{a_2}\ldots x_n^{a_n}$
 of $f$ is defined in the usual sense,
 see Figure~\ref{fig-lex}:
 $a_1$ is the largest exponent of $x_1$
in all monomials of $f$,
 $a_2$~is the largest exponent of $x_2$
in all monomials that contain $x_1^{a_1}$ as a factor,
 $a_3$~is the largest exponent of $x_3$
in all monomials that contain $x_1^{a_1}x_2^{a_2}$ as a factor,
and so on.
Of course, we may get a different
lexicographically largest monomial if we consider the variables in a
different order. The results remain valid independently
of the chosen order.

\begin{figure}[tb]
  \centering
  \begin{subfigure}[c]{0.36\linewidth}
    \centering
  \includegraphics[page=5]{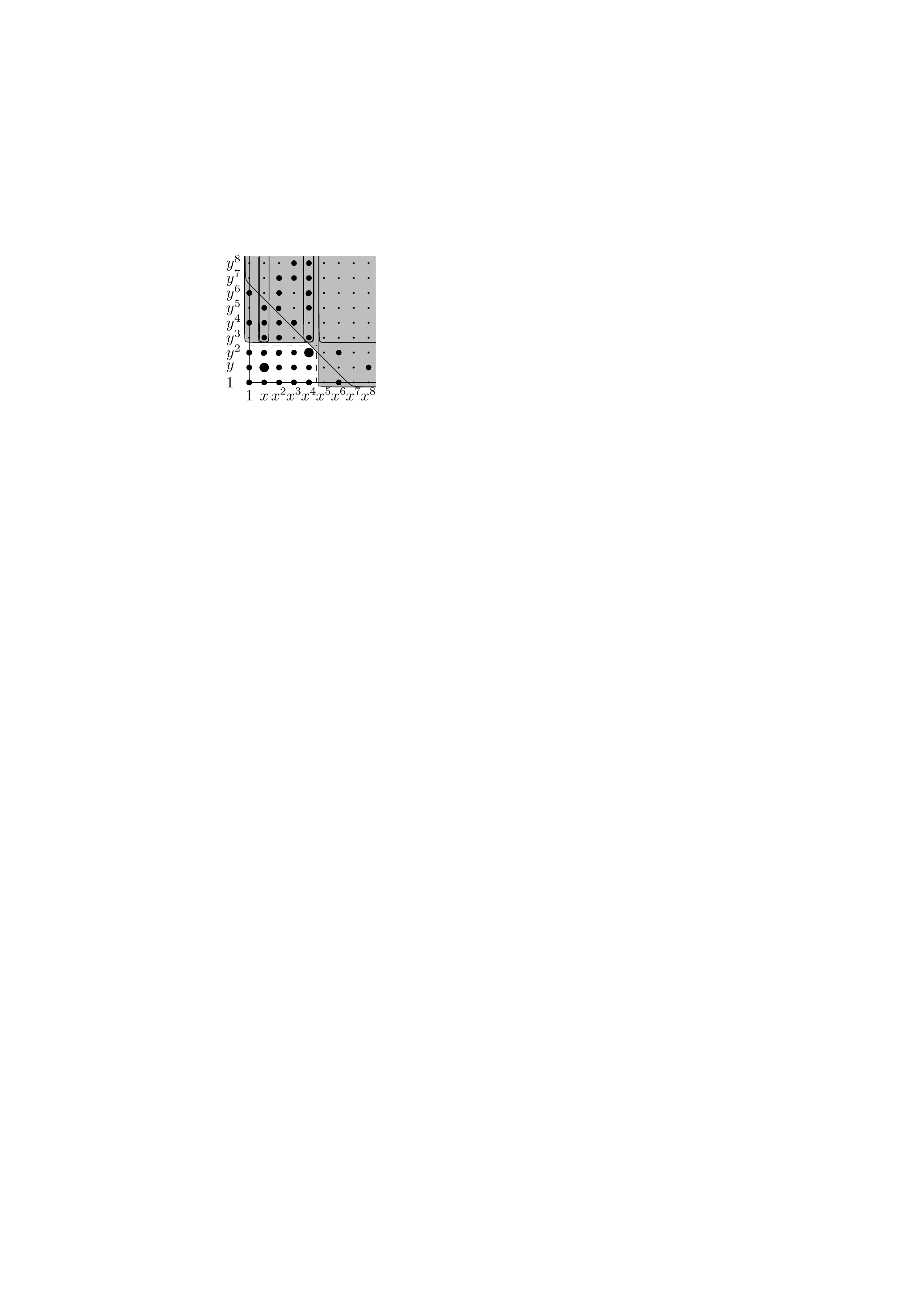}
    \subcaption{successively largest}
    \label{succ-largest}
  \end{subfigure}
$\to$
  \begin{subfigure}[c]{0.36\linewidth}
    \centering
  \includegraphics[page=4]{degrees}
    \subcaption{$d$-leading}
    \label{fig-d-leading}
  \end{subfigure}

\medskip
  \begin{subfigure}[c]{0.36\linewidth}
    \centering $\uparrow$
  \end{subfigure}
  \qquad
  \begin{subfigure}[c]{0.36\linewidth}
    \centering $\uparrow$
  \end{subfigure}
\bigskip

  \begin{subfigure}[c]{0.36\linewidth}
    \centering
    \includegraphics[page=2]{degrees}
    \subcaption{lexicographically
      largest}
    \label{fig-lex}
  \end{subfigure}
$\to$
  \begin{subfigure}[c]{0.36\linewidth}
    \centering
  \includegraphics[page=3]{degrees}
    \subcaption{maximal}
    \label{fig-maximal}
  \end{subfigure}\hfill

\medskip
  \begin{subfigure}[c]{0.36\linewidth}
    \centering $\uparrow$
  \end{subfigure}
  \qquad
  \begin{subfigure}[c]{0.36\linewidth}
    \centering $\uparrow$
  \end{subfigure}
\bigskip

  \begin{subfigure}[c]{0.36\linewidth}
    \centering
  \includegraphics[page=7]{degrees}
    \subcaption{partial degrees}
    \label{fig-partial-degree}
  \end{subfigure}
$\to$
  \begin{subfigure}[c]{0.36\linewidth}
    \centering
  \includegraphics[page=6]{degrees}
    \subcaption{total degree}
    \label{fig-total-degree}
  \end{subfigure}
  \caption{The forbidden monomials for the various assumptions are
    shown as grey regions, for $(d_1,d_2)=(4,2)$.
In the top row, $(e_1,e_2)=(1,1)$ was chosen.}
  \label{fig:conditions}
\end{figure}

{
\def\thelemmaNOTcounted{Q}

  \begin{lemmaNOTcounted}
    \label{main-quantitative}
If the lexicographically largest
monomial of $f$ is
 $x_1^{d_1}x_2^{d_2}\ldots x_n^{d_n}$,
 then the Quantitative Conclusion holds. 
\end{lemmaNOTcounted}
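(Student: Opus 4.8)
The plan is to prove Lemma~Q by the familiar one-variable-at-a-time induction on the number~$n$ of variables that underlies the Schwartz--Zippel lemma and Alon's proof of the Combinatorial Nullstellensatz; the only twist is that we must keep track of the lexicographically largest monomial rather than merely a total degree or a single known term. We may assume $f\ne0$, since otherwise $f$ has no monomials at all and the hypothesis cannot hold. In the base case $n=1$, the statement reduces to the classical fact that a nonzero univariate polynomial of degree $d_1$ over an integral domain has at most $d_1$ roots, so at least $|S_1|-d_1$ of the elements of $S_1$ are nonzeros of~$f$.

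For the inductive step I would write $f$ as a polynomial in $x_1$ with coefficients in $K[x_2,\dots,x_n]$,
\[
  f=\sum_{j=0}^{D} c_j(x_2,\dots,x_n)\,x_1^{\,j},\qquad c_D\not\equiv0 .
\]
By the first clause in the definition of the lexicographically largest monomial, $d_1$ is the largest exponent of $x_1$ occurring in any monomial of~$f$; hence $D=d_1$, and in particular the leading coefficient $c_{d_1}$ is a \emph{nonzero} polynomial in $x_2,\dots,x_n$. The crucial point, which I would verify directly from the definitions, is that the lexicographically largest monomial of $c_{d_1}\in K[x_2,\dots,x_n]$ is exactly $x_2^{d_2}\cdots x_n^{d_n}$.

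Granting this, the induction hypothesis applied to $c_{d_1}$ with the sets $S_2,\dots,S_n$ gives that the number of ``good'' tuples $(x_2,\dots,x_n)\in S_2\times\cdots\times S_n$ with $c_{d_1}(x_2,\dots,x_n)\ne0$ is at least $(|S_2|-d_2)\cdots(|S_n|-d_n)$. For each good tuple, $f(x_1,x_2,\dots,x_n)$ is a polynomial in the single variable $x_1$ of degree exactly $d_1$, because its leading coefficient $c_{d_1}(x_2,\dots,x_n)$ does not vanish; hence it has at most $d_1$ roots in the integral domain $K$ and is nonzero for at least $|S_1|-d_1$ choices of $x_1\in S_1$. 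Summing over all good tuples yields at least $(|S_1|-d_1)(|S_2|-d_2)\cdots(|S_n|-d_n)$ nonzeros of $f$ in $S_1\times\cdots\times S_n$, which is the Quantitative Conclusion.

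The only delicate step — the heart of the argument — is the claim that passing to the $x_1$-leading coefficient commutes with forming the lexicographically largest monomial. The monomials of $c_{d_1}$ are precisely the monomials $m$ in $x_2,\dots,x_n$ for which $x_1^{d_1}m$ is a monomial of~$f$; since all of these share the $x_1$-exponent $d_1$, comparing two of them lexicographically in $x_2,x_3,\dots$ gives the same verdict as comparing the monomials $x_1^{d_1}m$ of~$f$. Hence the recursive recipe ``take the largest exponent of $x_2$; among those, the largest exponent of $x_3$; and so on'', applied to the monomials of $f$ divisible by $x_1^{d_1}$, is word for word the recipe defining the lexicographically largest monomial of $c_{d_1}$. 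By hypothesis this recipe produces $x_1^{d_1}x_2^{d_2}\cdots x_n^{d_n}$ for $f$, hence $x_2^{d_2}\cdots x_n^{d_n}$ for $c_{d_1}$, as claimed. The order of the variables stays fixed throughout, and the only property of $K$ that is used is that it is an integral domain, via the univariate root bound.
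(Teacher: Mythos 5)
Your proof is correct and follows essentially the same route as the paper's: induction on $n$, writing $f$ in powers of $x_1$, applying the induction hypothesis to the leading coefficient $h_{d_1}=c_{d_1}$, and using the univariate root bound for each good tuple. The only difference is that you spell out in detail the step the paper dispatches with ``by definition'' (that the lexicographically largest monomial of $c_{d_1}$ is $x_2^{d_2}\cdots x_n^{d_n}$), which is a welcome clarification but not a different argument.
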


}

\subsection{Applications}
\label{sec:importance}

Lemmas~\ref{main-quantitative}
and~\ref{main-existence}
and their many relatives in the literature (to be discussed shortly)
have numerous important applications to combinatorics and algorithms.
The results with the Quantitative Conclusion
are the basis for many randomized algorithms.
The prime example is
polynomial identity testing: Here one wants to check whether two
polynomials are identical, or whether a given
polynomial is identically zero.
The polynomials are given by some algorithm that can evaluate them for
specific values.
Lemmas~\ref{main-quantitative} provides a randomized test for this
property, provided some a-priori bounds on the degree can be given.
For 
more applications, see for example
\cite[Section~7]{mr-ra-95}. 

When applying the results with the Existence Conclusion, in particular the
Combinatorial Nullstellensatz (Corollary~\ref{alon}),
a nonzero
solution of the polynomial at hand represents some combinatorial
object
whose existence should be guaranteed.
See Alon \cite{a-cn-99} for a selection of applications.

The two application scenarios focus on different ends of the
probability spectrum.  In randomized algorithms, the ``success
probability'' of finding a nonzero should ideally be close to 1, but a reasonable
probability that decays only polynomially to zero is good enough. Then, by
choosing larger sets $S_i$ or by repeating the experiment, the
success probability can be amplified 
to any desired 
level.  The
precise 
probability bounds are not so important in this context.

On the other hand, when it comes to questions of existence,
the success of the argument comes down to whether the probability of
having a non-zero is non-zero or not. Here it is important to know the
smallest values $d_i$ for which the Existence Conclusion holds.

\subsection{
  Assumptions about the coefficient ring}
\label{sec:weaker-algebra}

To a lesser extent, the various results in the literature differ in
the assumption about the underlying ring of coefficients.
All results that we state (with the exception of
Lemmas~\ref{interpol} and~\ref{interpol1}
in Appendix~\ref{proof-tao-vu}, which require $K$ to be a field)
hold when $K$ is an integral domain, i.e., a
commutative ring without zero divisors.
We mention an even weaker condition under which the theorems hold:
$K$ can be an arbitrary commutative ring, but none of the differences $x-y$ for
$x,y\in S_i$ must be a zero divisor, see
 \cite[Definition~2.8]{schauz-08} or
 \cite[Condition~(D)]{bishnoi_2018}.
 

\begin{figure}[htb]
  \centering

\def\BOX#1#2#3#4#5#6{
  \node[boxnode,fill=#1,align=center] (#2) #3
    {\raise5pt\hbox{\strut}\strut\ #4\ \null\\[4pt]\hfill\footnotesize#5\null};
\node[above=-1.4mm of #2.north east]{\llap{\tiny \strut #6}};}

\def\BoxQ#1#2#3#4{
  \node[boxnode,fill=blue!10,align=center] (#1) #2
    {\raise5pt\hbox{\strut}\strut\ #3\ \null\\[4pt]\hfill\footnotesize#4\null};
\node[above=-1.4mm of #1.north east]{\llap{\tiny \strut Qu, Ex}};}

\def\BoxQ#1#2#3#4{\BOX{blue!10}{#1}{#2}{#3}{#4}{Qu, Ex}}
\def\BoxX#1#2#3#4{\BOX{red!10}{#1}{#2}{#3}{#4}{Ex}}
\def\BoxxQ#1#2#3#4{\BOX{blue!10,double}{#1}{#2}{#3}{#4}{Qu, Ex}}
\def\BoxxX#1#2#3#4{\BOX{red!10,double}{#1}{#2}{#3}{#4}{Ex}}

\begin{tikzpicture}[
  boxnode/.style={rectangle, draw=black, minimum size=6mm},
  style={->,shorten >=0.3pt,>={Stealth[round]},semithick}
]

\BoxxQ{lexmax}{}{lexicographically
  largest}{{Lemma~\ref{main-quantitative},
    Corollary~\ref{Q1}%
    \vspace{-2.2pt}}}
\BoxQ{sucmax}{[above=of lexmax]}{successively largest}{Theorem~\ref{theorem-knuth}}

\BoxxX{maximal}{[right=of lexmax]}{maximal}{Lemma~\ref{main-existence}}
\BoxX{totdeg}{[below=of maximal]}{largest total degree}{Corollar\smash
  y~\ref{alon}}
\BoxX{d-leading}{[above=of maximal]}{$d$-leading}{Theorem~\ref{schauz}}
\BoxQ{di-xi}{[below=of lexmax]}{$d_i$ = degree in $x_i$}
    {Corollar\smash y~\ref{generalized-demillo-lipton-zippel}}
\BoxQ{di-global}{[below=of di-xi]}{$d \ge{}$degree in $x_i$}{Corollar\smash y~\ref{zippel}}
\BoxQ{deMillo}{[below=of di-global]}{total degree $d$}{Corollar\smash y~\ref{deMillo}}
\BoxQ{Alon-Fueredi}{[left=of deMillo.171]}{total degree $d$,\ \null\\\null\ $d_i \ge{}$degree in
  $x_i$}{Theorem~\ref{gen-AF}}

\def\BoxQ#1#2#3#4{\BOX{blue!10}{#1}{#2}{#3}{#4}{Qu, Ex\quad}}
\BoxQ{totdegd}{[left=of di-xi]}{\,total degree $d$\,}{Corollar\smash y~\ref{schwartz-zippel}}
\node[below=-0.3mm of totdeg.south]{{\tiny \strut Combinatorial Nullstellensatz}};
\node[below=-0.3mm of totdegd.south]{{\tiny \strut Schwartz--Zippel Lemma}};


\draw[<-] (sucmax) -- (lexmax);
\draw[->] (sucmax) -- (d-leading);
\draw[<-] (maximal) -- (totdeg);
\draw[<-] (lexmax) -- (totdegd);
\draw[->] (lexmax) -- (maximal);
\draw[<-] (d-leading) -- (maximal);
\draw[<-] (lexmax) -- (di-xi);
\draw[<-] (totdeg) -- (di-xi);
\draw[<-] (di-xi) -- (di-global);
\draw[<-] (di-global) -- (deMillo);
\draw[->] (Alon-Fueredi) -- (deMillo);
\draw[->] (Alon-Fueredi.north) -- (di-xi);


\end{tikzpicture}
 
  \caption{Relation between the assumptions on $d_1,\ldots,d_n$. The
    Existence
    and/or some
    Quantitative
    Conclusion is indicated at the upper right corner of
    each box.}
  \label{fig:assumptions}
\end{figure}

\subsection{Comparison of the assumptions}

Figure~\ref{fig:assumptions} compares the strength of the various
assumptions in these theorems,
including some conditions that are defined in later sections.

The
\emph{lexicographically largest} condition of
Lemma~\ref{main-quantitative}
implies
the 
\emph{maximality} assumption of
Lemma~\ref{main-existence},
but since
the Quantitative Conclusion in
Lemma~\ref{main-quantitative} is stronger than
the Existence Conclusion in
Lemma~\ref{main-existence},
neither of the two results can be derived from the other.
We will see in Section~\ref{sec:not} that
there is no common generalization.

While maximality is not sufficient to imply the Quantitative Conclusion,
there are some weaker quantitative conclusions that one can
derive under the maximality assumption, see
Section~\ref{sec:weak-conclusion}.

 The assumptions in Lemmas~\ref{main-existence} and \ref{main-quantitative} for the Existence or the
Quantitative Conclusion are not the weakest assumptions in terms of the monomials of $f$
that we are aware of.
The two boxes in the top row of
Figure~\ref{fig:conditions} and~\ref{fig:assumptions}
correspond to some weakened assumptions, which we treat in Section~\ref{weaker}.


\subsection{Tightness}
\label{sec:tight}

A simple family of polynomials shows that the bounds
of Lemmas~\ref{main-existence} and \ref{main-quantitative} are tight:
Select subsets $A_i\subset S_i$ of size $|A_i|=d_i$. Then the polynomial
\begin{equation}
  \label{eq:tight}
  \prod_{i=1}^n \prod_{a\in A_i}(x_i-a)
\end{equation}
has degree $d_i$ in each variable $x_i$.
It has
$(|S_1|-d_1)(|S_2|-d_2)\dots(|S_n|-d_n)$ zeros.
The term
$x_1^{d_1}x_2^{d_2}\ldots x_n^{d_n}$ is simultaneously the lexicographically
largest monomial and the unique maximal monomial, (and also the unique successively
largest exponent sequence
in the sense of Theorem~\ref{theorem-knuth} in
Section~\ref{weaker-quantitative}).

\subsection{Existence conclusions in the literature}
\label{sec:e}

This is Alon's original Combinatorial Nullstellensatz:

\begin{corollaryX}[{Combinatorial Nullstellensatz, Alon 1999 \cite[Theorem 1.2]{a-cn-99}}]
\label{alon}
If
$x_1^{d_1}x_2^{d_2}\ldots x_n^{d_n}$ is a monomial of largest total
degree, then the Existence Conclusion holds. 
\end{corollaryX}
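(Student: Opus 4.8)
The plan is to obtain Corollary~\ref{alon} as an immediate specialization of the Generalized Combinatorial Nullstellensatz (Lemma~\ref{main-existence}), whose hypothesis is the weaker requirement that $x_1^{d_1}x_2^{d_2}\ldots x_n^{d_n}$ be a \emph{maximal} monomial of~$f$. Thus the only thing that needs to be verified is the elementary implication ``largest total degree $\Rightarrow$ maximal''. Once that is in hand, Lemma~\ref{main-existence} applies with exactly the same sets $S_1,\dots,S_n$ and the same hypothesis $|S_i|>d_i$, and yields precisely the Existence Conclusion asserted in the corollary.

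The key (and essentially only) step is the following observation. The total degree is strictly monotone with respect to the divisibility order on monomials: if a monomial $m$ properly divides a monomial $m'$, then $m\ne m'$ forces at least one exponent of $m'$ to exceed the corresponding exponent of $m$, so the total degree of $m'$ is strictly larger than that of $m$. Now suppose $m=x_1^{d_1}x_2^{d_2}\ldots x_n^{d_n}$ is a monomial of $f$ of largest total degree but is not maximal; then $m$ divides some monomial $m'\ne m$ of $f$, whence $\deg m' > \deg m$, contradicting maximality of $\deg m$ among the monomials of~$f$. Hence $m$ is maximal, and we invoke Lemma~\ref{main-existence}.

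I do not anticipate any real obstacle: all of the combinatorial and algebraic substance is carried by Lemma~\ref{main-existence}, and the corollary is a one-line deduction. The only point requiring a little care is purely notational, namely to keep the $d_i$ throughout identified with the exponents of the distinguished top-degree monomial, so that the inequalities $|S_i|>d_i$ line up verbatim with the hypothesis of Lemma~\ref{main-existence}. For completeness one could instead reproduce Alon's original self-contained argument --- a downward induction on $\deg f$ in which one repeatedly divides $f$ by the polynomials $\prod_{a\in A_i}(x_i-a)$ with $|A_i|=d_i$ --- but routing through Lemma~\ref{main-existence}, which is already established above, is both shorter and more in keeping with the survey's organization.
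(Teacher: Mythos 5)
Your proposal is correct and matches the paper's own organization: Corollary~\ref{alon} is presented there precisely as a specialization of Lemma~\ref{main-existence}, via the implication ``largest total degree $\Rightarrow$ maximal'' indicated in Figure~\ref{fig:assumptions}, and your one-line verification of that implication is exactly the needed step.
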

Alon derives Corollary~\ref{alon} from a companion result,
\cite[Theorem 1.1]{a-cn-99}
(which can be proved by the trimming procedure of
Proposition~\ref{proposition-trimming}
in Section~\ref{sec:trimming}).
It states that, if the  Existence Conclusion does not hold, and $f$ is zero
on
$S_1\times S_2 \times \dots \times S_n$, it
can be represented in a certain way in the ideal
generated by the polynomials
$
\prod_{a\in S_i}(x_i-a)$.
This statement is analogous to Hilbert's Nullstellensatz, and this justifies the name
Combinatorial Nullstellensatz that Alon coined for these theorems.
It is of interest in its own right,
see
\cite[Section~9]{a-cn-99}
or \cite{clark-14},
but
we will not pursue these connections.

\subsection{Quantitative conclusions in the literature}
\label{sec:q}

The following bound follows by estimating the product
$(1-p_1)(1-p_2)\ldots(1-p_n)$ in \eqref{bound-equal} by the lower bound $1-p_1-p_2-\cdots-p_n$.

\begin{corollaryQ}[Schwartz 1979
{
\cite[Lemma 1]{s-pavpi-79,s-fpavp-80}}]
\label{Q1}
  Under the assumptions of Lemma~\ref{main-quantitative},
  i.e.,
  if the lexicographically largest
monomial of $f$ is
 $x_1^{d_1}x_2^{d_2}\ldots x_n^{d_n}$,
the number of nonzeros is at least
$$
|S_1\times S_2 \times \dots \times S_n| \cdot
\bigl(1-\tfrac{d_1}{|S_1|}-\tfrac{d_2}{|S_2|}-
\cdots
-\tfrac{d_n}{|S_n|}\bigr)
.
$$
\end{corollaryQ}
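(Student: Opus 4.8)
The plan is to invoke Lemma~\ref{main-quantitative} and then weaken the product in~\eqref{bound-equal} into the stated sum, exactly as announced in the sentence preceding the corollary. Write $p_i:=d_i/|S_i|$. First I would dispose of the trivial case: if $|S_i|\le d_i$ for some~$i$, then $p_i\ge1$, so the claimed bound $|S_1\times\dots\times S_n|\cdot(1-p_1-\cdots-p_n)$ is at most $|S_1\times\dots\times S_n|\cdot(1-p_i)\le0$, and there is nothing to prove because the number of nonzeros is nonnegative. So assume $|S_i|>d_i$ for every~$i$, hence $0\le p_i<1$. By Lemma~\ref{main-quantitative}, the number of nonzeros is at least the quantity in~\eqref{bound-equal}, which, after factoring $|S_i|$ out of each factor $|S_i|-d_i$, equals $|S_1\times\dots\times S_n|\cdot\prod_{i=1}^n(1-p_i)$.

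It then remains to prove the elementary inequality
\[
\prod_{i=1}^n(1-p_i)\ \ge\ 1-\sum_{i=1}^n p_i,
\]
valid for any reals $0\le p_i\le1$. I would argue by induction on~$n$; for $n=1$ it is an equality. For the inductive step, write $\prod_{i=1}^{n}(1-p_i)=(1-p_n)\prod_{i=1}^{n-1}(1-p_i)$. Since $1-p_n\ge0$, the inductive hypothesis $\prod_{i=1}^{n-1}(1-p_i)\ge1-\sum_{i=1}^{n-1}p_i$ may be multiplied by $1-p_n$, giving
\[
\prod_{i=1}^{n}(1-p_i)\ \ge\ (1-p_n)\Bigl(1-\sum_{i=1}^{n-1}p_i\Bigr)\ =\ 1-\sum_{i=1}^{n}p_i+p_n\sum_{i=1}^{n-1}p_i\ \ge\ 1-\sum_{i=1}^{n}p_i ,
\]
the last inequality because $p_n\ge0$ and $\sum_{i=1}^{n-1}p_i\ge0$. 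Substituting this into the lower bound from the previous paragraph yields the corollary.

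I do not expect any real obstacle: the substance lies entirely in Lemma~\ref{main-quantitative}, and what remains is a textbook estimate (a Weierstrass- or Bonferroni-type inequality). The only points requiring a moment's care are the nonnegativity of $1-p_n$ that legitimizes multiplying the inductive inequality, and the bookkeeping of the degenerate case $|S_i|\le d_i$, in which the asserted bound is vacuous.
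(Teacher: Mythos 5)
Your proposal is correct and follows exactly the route the paper takes: invoke Lemma~\ref{main-quantitative} and then bound the product in~\eqref{bound-equal} below by $1-p_1-\cdots-p_n$ (the standard Weierstrass/Bonferroni estimate), which is all the paper says in the sentence preceding the corollary. Your added care about the degenerate case $|S_i|\le d_i$ and the induction for the product inequality are fine, just more detail than the paper bothers to write out.
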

As a special case, when all sets $S_i$ are equal, we get
\begin{corollaryQ}[The 
  Schwartz--Zippel Lemma\footnote
  {see also 
    Wikipedia, \url{http://en.wikipedia.org/wiki/Schwartz-Zippel_lemma},
    accessed 2022-01-16},
  Schwartz 1979 
  {\cite[Corollary 1]{s-pavpi-79,s-fpavp-80}},
see also {\cite[Theorem~7.2]{mr-ra-95}} or
  {\cite[Exercise~9.1.1, pp.~331--332]{tv-ac-06}}]
  \label{schwartz-zippel}
  \ \\
If
$S_1=S_2=\dots=S_n=S$ and the polynomial has total degree $d\ge0$,
then
the number of nonzeros is at least
$$
|S|^n\cdot
\bigl(1-
\tfrac{d}{|S|}\bigr).
$$
In other words, the probability of getting a zero of $f$ if the
variables $x_i$ are uniformly and independently chosen from $S$ is at
most
$$d/|S|.$$
\end{corollaryQ}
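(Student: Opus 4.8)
The plan is to derive Corollary~\ref{schwartz-zippel} as the special case of Corollary~\ref{Q1} in which all the sets $S_i$ coincide. So the first step is to check that the hypotheses of Corollary~\ref{Q1} are met. We are given a polynomial $f$ of total degree $d\ge 0$; we need degree bounds $d_1,\dots,d_n$ attached to the lexicographically largest monomial. Let $x_1^{d_1}x_2^{d_2}\cdots x_n^{d_n}$ be that monomial (in any fixed variable order). Since every monomial of $f$ has total degree at most $d$, in particular this one does, so $d_1+d_2+\cdots+d_n\le d$. If $d<n$ then some $d_i=0$, and there is in fact nothing to prove in those coordinates; in general we simply carry the $d_i$ along. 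The only mild subtlety is that Lemma~\ref{main-quantitative} and hence Corollary~\ref{Q1} formally require $|S_i|>d_i$: here this is the hypothesis $|S|>d\ge d_i$, except we must also cover the degenerate case $|S|\le d$, where the claimed bound $|S|^n(1-d/|S|)$ is nonpositive and the statement is vacuously true.

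Assuming $|S|>d$, apply Corollary~\ref{Q1} with $S_1=\cdots=S_n=S$. It gives that the number of nonzeros of $f$ on $S^n$ is at least
\[
|S|^n\Bigl(1-\tfrac{d_1}{|S|}-\tfrac{d_2}{|S|}-\cdots-\tfrac{d_n}{|S|}\Bigr)
= |S|^n\Bigl(1-\tfrac{d_1+d_2+\cdots+d_n}{|S|}\Bigr).
\]
Now I would use the monotonicity of this expression in the sum of the $d_i$: since $d_1+d_2+\cdots+d_n\le d$, we have $1-\tfrac{d_1+\cdots+d_n}{|S|}\ge 1-\tfrac{d}{|S|}$, so the count of nonzeros is at least $|S|^n\bigl(1-\tfrac{d}{|S|}\bigr)$, which is the asserted bound. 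This proves the first assertion.

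For the probability reformulation, divide by $|S|^n$, the total number of tuples in $S^n$. The fraction of tuples with $f\ne 0$ is at least $1-d/|S|$, hence the fraction with $f=0$ is at most $d/|S|$; since a uniform independent choice of $(x_1,\dots,x_n)\in S^n$ assigns each tuple probability $|S|^{-n}$, the probability of hitting a zero of $f$ is at most $d/|S|$, as claimed.

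There is essentially no hard obstacle here: the result is a straightforward specialization, and the main thing to be careful about is bookkeeping — ensuring the degenerate regime $|S|\le d$ is acknowledged (the bound is trivially valid there), and invoking the total-degree inequality $\sum_i d_i\le d$ to pass from the per-variable bound of Corollary~\ref{Q1} to the single-parameter bound stated here. One could alternatively phrase the whole thing without ever naming the $d_i$, by noting that $f$ of total degree $d$ satisfies the Schwartz case of Lemma~\ref{main-quantitative} directly through the box labelled ``total degree $d$'' in Figure~\ref{fig:assumptions} (Corollary~\ref{deMillo}), but routing through Corollary~\ref{Q1} as above is the most economical.
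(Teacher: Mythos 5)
Your derivation is correct and is exactly the route the paper intends: Corollary~\ref{schwartz-zippel} is stated there as the special case of Corollary~\ref{Q1} with $S_1=\dots=S_n=S$, the only implicit step being the one you spell out, namely that the exponents of the lexicographically largest monomial sum to at most the total degree $d$, so $1-\sum_i d_i/|S|\ge 1-d/|S|$ (and the degenerate case $|S|\le d$ is vacuous, as you note). Only your closing aside is off: Corollary~\ref{deMillo} would not serve as an alternative route, since its conclusion $|S|^n(1-d/|S|)^n$ is weaker than the bound claimed here and it assumes the specialized set $S=\{1,\dots,|S|\}$; this does not affect your actual proof.
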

The probabilistic formulation with the
upper bound ${d/|S|}$ on the probability of getting a zero is
the common statement of this lemma.
The same holds for the following statements, but for comparison, we
formulate all  theorems in terms of the number of nonzeros.

\medskip

The following statement looks at the degree of $f$ in each variable
$x_i$. It follows trivially from
Lemma~\ref{main-quantitative}.
\begin{corollaryQ}[Generalized DeMillo--Lipton--Zippel Theorem
  {\cite[Thm.~4.6]{bishnoi_2018}},
  \smallskip
  Knuth 
  1997 {\cite[Ex.~4.6.1--16, p.~436]{knuth2}}]
  \label{generalized-demillo-lipton-zippel}
\noindent\\
  If $d_i$ is the degree of variable $x_i$ in $f$,
  the Quantitative Conclusion holds.
\end{corollaryQ}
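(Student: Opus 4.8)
The plan is to obtain this as an immediate consequence of Lemma~\ref{main-quantitative} via a monotonicity observation about the Quantitative Conclusion. First I would let $x_1^{a_1}x_2^{a_2}\ldots x_n^{a_n}$ be the lexicographically largest monomial of $f$ for the given variable order. For each $i$, the exponent $a_i$ is, by definition, the largest exponent of $x_i$ among those monomials of $f$ that contain $x_1^{a_1}\cdots x_{i-1}^{a_{i-1}}$ as a factor --- a maximum over a subset of the monomials --- whereas $d_i$ is the largest exponent of $x_i$ over \emph{all} monomials of $f$. Hence $a_i \le d_i$ for every $i$ (with $a_1 = d_1$, although equality will not be needed).

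Next I would apply Lemma~\ref{main-quantitative} to the exponent sequence $(a_1,\dots,a_n)$. The hypothesis $|S_i| > d_i$ of the present statement gives $|S_i| > a_i$ for all $i$, so the Quantitative Conclusion applies and tells us that the number of tuples in $S_1 \times \dots \times S_n$ on which $f$ is nonzero is at least $\prod_{i=1}^n (|S_i| - a_i)$.

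Finally I would compare the two products: from $0 \le a_i \le d_i < |S_i|$ we have $|S_i| - a_i \ge |S_i| - d_i > 0$ for every $i$, hence $\prod_{i=1}^n (|S_i| - a_i) \ge \prod_{i=1}^n (|S_i| - d_i)$, which is precisely the bound asserted by the Quantitative Conclusion for the parameters $d_1,\dots,d_n$. The only point that needs care --- and the reason I would not call the argument entirely automatic --- is the direction of the domination $a_i \le d_i$: it is the lexicographically largest exponents that are bounded by the partial degrees, not conversely, and passing to the larger $d_i$ only weakens the lower bound while keeping it strictly positive because $|S_i| > d_i$. All the substance sits in Lemma~\ref{main-quantitative}, which is why the corollary is ``trivial'' once that lemma is in hand.
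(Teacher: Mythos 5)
Your proposal is correct and matches the paper's own derivation: the paper states that the corollary ``follows trivially from Lemma~\ref{main-quantitative}'', with the accompanying note that the exponents of the lexicographically largest monomial are at most the partial degrees $d_i$, which is exactly the monotonicity argument you spell out. Your write-up just makes the two-line comparison $a_i\le d_i<|S_i|$ and the resulting product inequality explicit.
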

Note that $f$ does not have to contain the term
$x_1^{d_1}x_2^{d_2}\ldots x_n^{d_n}$ in this case, but the
powers occurring in the lexicographically largest
monomial of $f$ are 
at most $d_i$.

As a special case, with a uniform bound on the degrees and all sets
$S_i$ equal, we get:
\begin{corollaryQ}[Zippel 1979 {\cite[Theorem 1, p.~221]{z-pasp-79}}]
  \label{zippel}
  Suppose that $f$ is not identically zero and 
 the degree of each variable $x_i$ in $f$
is bounded by $d$, and
$S_1=S_2=\dots=S_n=S$.
Then the number of nonzeros is at least
$$
(|S|-d)^n=
|S|^n\cdot
\bigl(1-
{d}/{|S|}\bigr)^n .
$$
\end{corollaryQ}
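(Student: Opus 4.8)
The plan is to read off this statement as the special case of Corollary~\ref{generalized-demillo-lipton-zippel} (the Generalized DeMillo--Lipton--Zippel Theorem) in which all the sets coincide, $S_1=\dots=S_n=S$, followed by one elementary monotonicity step. First I would set $d_i$ to be the degree of the variable $x_i$ in $f$; since $f$ is not identically zero this is a well-defined nonnegative integer for each $i$, and by hypothesis $d_i\le d$. I also record the standing assumption $|S|>d$, which is the present incarnation of the hypothesis ``$|S_i|>d_i$'' that accompanies the Quantitative Conclusion (and which is needed in any case for the factor $1-d/|S|$ to be positive; when $|S|=d$ the claimed bound is $0$ and there is nothing to prove).

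Next I would invoke Corollary~\ref{generalized-demillo-lipton-zippel} with these $d_i$. Because $|S|>d\ge d_i$ for every $i$, its hypothesis is met, and it asserts that the number of tuples in $S^n$ on which $f$ is nonzero is at least $\prod_{i=1}^n(|S|-d_i)$.

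It then remains only to compare $\prod_{i=1}^n(|S|-d_i)$ with $(|S|-d)^n$. Here each factor satisfies $|S|-d_i\ge |S|-d\ge 1>0$, using $d_i\le d$ and $|S|>d$; multiplying these $n$ inequalities between positive quantities gives $\prod_{i=1}^n(|S|-d_i)\ge(|S|-d)^n$, and the displayed identity $(|S|-d)^n=|S|^n(1-d/|S|)^n$ is pure algebra. I do not anticipate a real obstacle: the statement is a one-step corollary, and the only point that needs a moment's care is this last replacement of the individual exponents $d_i$ by the common bound $d$, which is legitimate precisely because the sign condition $|S|>d$ guarantees that enlarging a subtracted quantity can only shrink the product.
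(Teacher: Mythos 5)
Your proposal is correct and matches the paper's route exactly: the paper obtains Corollary~\ref{zippel} as the special case of Corollary~\ref{generalized-demillo-lipton-zippel} with all sets equal and a uniform degree bound, and your only added content is the harmless monotonicity step $\prod_i(|S|-d_i)\ge(|S|-d)^n$ under $|S|>d\ge d_i$, which is exactly what that specialization tacitly uses. Your explicit handling of the implicit hypothesis $|S|>d$ (and the trivial case $|S|=d$) is a sensible clarification, not a deviation.
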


The following statement puts a stronger assumption on $d$:
\begin{corollaryQ}
  [{DeMillo and Lipton 1978~\cite[Inequality~\thetag1]{dl-prapt-78}}]
  \label{deMillo}
If $f$ has total degree $d\ge0$ and
$S_1=S_2=\dots=S_n=S = \{1,2,\ldots,|S|\}$,
then the number of nonzeros is at least
$$
|S|^n\cdot
(1-d/|S|)^n.$$
\end{corollaryQ}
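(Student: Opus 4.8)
The plan is to derive Corollary~\ref{deMillo} from the already-stated Corollary~\ref{zippel} (equivalently, from Corollary~\ref{generalized-demillo-lipton-zippel}). First I would record that the hypothesis ``$f$ has total degree $d\ge 0$'' forces $f$ to be nonzero, since the zero polynomial does not have total degree $d$ for any $d\ge 0$; I would also recall that the Quantitative Conclusion is stated under the standing hypothesis $|S|>d$. The one substantive step is then the elementary monotonicity of degree: every monomial $x_1^{a_1}x_2^{a_2}\cdots x_n^{a_n}$ of $f$ satisfies $a_1+a_2+\cdots+a_n\le d$, hence $a_i\le d$ for each $i$, so the degree of every variable $x_i$ in $f$ is at most $d$. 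Thus the hypotheses of Corollary~\ref{zippel} hold with its bound also taken to be $d$ and its common set $S$ taken to be $\{1,2,\dots,|S|\}$, and its conclusion is exactly the asserted bound $(|S|-d)^n=|S|^n(1-d/|S|)^n$.

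I would then point out explicitly that the proof above uses nothing about the shape of $S$: neither the fact that it is an initial segment of the integers nor any arithmetic structure of it enters, and an arbitrary set of the same cardinality works equally well. The special form $S=\{1,2,\dots,|S|\}$ is retained only because this is how DeMillo and Lipton originally phrased the estimate. For readers who prefer an argument not routed through the earlier corollaries, I would also include the classical self-contained proof by induction on $n$: the base case $n=1$ is ``a nonzero univariate polynomial of degree $\le d$ has at most $d$ zeros,'' leaving at least $|S|-d$ nonzeros; for the inductive step I would write $f=\sum_{j=0}^{k}c_j(x_1,\dots,x_{n-1})\,x_n^{j}$ with leading coefficient $c_k\ne 0$ and $k\le d$, note that $c_k$ has total degree at most $d-k$, apply the induction hypothesis to $c_k$ to obtain at least $(|S|-(d-k))^{n-1}$ tuples $(x_1,\dots,x_{n-1})\in S^{n-1}$ with $c_k\ne0$, and observe that for each such tuple the resulting univariate polynomial in $x_n$ has degree exactly $k$ and hence at least $|S|-k$ nonzeros. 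Multiplying gives at least $(|S|-(d-k))^{n-1}(|S|-k)$ nonzeros, and since $0\le k\le d<|S|$ both factors are at least $|S|-d$, so this is at least $(|S|-d)^n$, closing the induction.

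I do not expect any genuine obstacle: Corollary~\ref{deMillo} is the weakest entry in the hierarchy of Figure~\ref{fig:assumptions}, it is implied by Corollary~\ref{zippel} (a strictly stronger hypothesis gives the same bound), and its conclusion is weaker than that of Corollary~\ref{schwartz-zippel} when $n\ge 2$. The only point that calls for a word of care is precisely the degenerate case: unlike in the Existence Conclusion, where a vacuous hypothesis is harmless, the displayed inequality would actually be false for $f\equiv 0$, so I would make sure the conditions $f\not\equiv 0$ and $|S|>d$ are in force before invoking Corollary~\ref{zippel}.
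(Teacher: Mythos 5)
Your derivation matches the paper's own (implicit) route: Corollary~\ref{deMillo} is obtained from Corollary~\ref{zippel} simply because a total degree bound $d$ bounds the degree in each variable by $d$, and your care about $f\not\equiv 0$ and $|S|>d$ is exactly the right fine print. The added self-contained induction is a correct bonus but not needed; the paper treats this statement purely as a weakening in the hierarchy of Figure~\ref{fig:assumptions}.
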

Note that this has essentially the same assumptions as
Corollary~\ref{schwartz-zippel} (only the assumption about the set $S$ is
more specialized),
but a weaker conclusion.

\subsection{Comparison between the results}
\label{sec:remarks}

The relation between the results in their published form
is confusing.
This is discussed at length
in \cite[Section~4]{bishnoi_2018}
and in several blog posts\footnote
{\url{https://anuragbishnoi.wordpress.com/2015/10/19/alon-furedi-schwartz-zippel-demillo-lipton-and-their-common-generalization/},
%
%
  \url{https://rjlipton.wpcomstaging.com/2009/11/30/the-curious-history-of-the-schwartz-zippel-lemma/}}.
Above, we have attempted to present them systematically in a logical
order, irrespective of the historic development.

As mentioned
in Section~\ref{sec:importance},
the precise bounds for the Qualitative Conclusion are of minor importance
for the 
applications, and
researchers may
 prefer to state their results in a form
that is more convenient to apply or easier to remember
instead of
the strongest form.
Thus, the reason that
Lemma~\ref{main-quantitative}, which is,
among the
statements with the Quantitative Conclusion,
 the strongest and most general so far, was apparently not written down before
is
simply that
nobody cared to do so.

\subsection{Precursor results}

We mention two precursor results:
In the first edition of
Knuth's \emph{Art of Computer Programming}, Vol.~2, 
there is 
a weaker, qualitative version of the Quantitative Conclusion:
\begin{corollaryQ}[Knuth 1969 {\cite[Ex.~4.6.1--16, p.~379, solution on
    p.~540\footnote{In the second edition, these are on
      p.~418 and p.~620.
In the third edition, this exercise has been replaced by the
statement of
Corollary~\ref{generalized-demillo-lipton-zippel}.}]{knuth2-1}}]
If $f$ is not identically zero and
$S_1=S_2=\dots=S_n=
\{-N,\allowbreak-N+1,\allowbreak
\ldots,\allowbreak N-1,N\}$,
then the fraction of zeros of $f$ in
$S_1\times S_2 \times \dots \times S_n$ goes to zero as $N\to \infty$.
%
\end{corollaryQ}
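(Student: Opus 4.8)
The plan is to obtain this statement as an immediate corollary of the quantitative bounds already at our disposal, in particular the Schwartz--Zippel Lemma (Corollary~\ref{schwartz-zippel}). First I would record that, since $f$ is not identically zero, it has a well-defined finite total degree $d\ge 0$, a number that depends on $f$ alone and not on $N$. The common range is $S=\{-N,-N+1,\ldots,N-1,N\}$, so $|S|=2N+1$, and in particular $|S|>d$ as soon as $N$ is large enough (say $N\ge d/2$).

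Next I would apply Corollary~\ref{schwartz-zippel} with this $S$ and this $d$: the number of tuples $(x_1,\dots,x_n)\in S^n$ with $f(x_1,\dots,x_n)\ne0$ is at least $|S|^n(1-d/|S|)$. Subtracting from the total count $|S|^n$, the number of zeros of $f$ in $S^n$ is at most $|S|^n\cdot d/|S|$, so the fraction of zeros is at most $d/|S|=d/(2N+1)$.

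Letting $N\to\infty$, the bound $d/(2N+1)$ tends to $0$, which is exactly the asserted conclusion. There is no real obstacle in this argument: the only thing that matters is that $f\ne0$ pins $d$ down as a fixed finite quantity, so that the degree does not grow with $N$. If one prefers to track partial degrees, one can instead invoke Corollary~\ref{zippel} or Corollary~\ref{generalized-demillo-lipton-zippel} with the degrees $d_i$ of $x_i$ in $f$, getting a fraction of zeros at most $1-\prod_{i=1}^n(1-d_i/(2N+1))$, which again tends to $0$. And if a self-contained proof avoiding the later results is wanted -- which is presumably what Knuth had in 1969 -- one can argue directly by induction on $n$, writing $f$ as a polynomial in $x_n$ over $K[x_1,\dots,x_{n-1}]$ and bounding the zeros by those arising from a vanishing leading coefficient together with at most $\deg_{x_n}f$ further values of $x_n$ for each remaining tuple; this is essentially the standard Schwartz--Zippel induction specialized to equal ranges.
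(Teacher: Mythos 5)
Your derivation is correct: since $f\ne0$ fixes the total degree $d$ independently of $N$, Corollary~\ref{schwartz-zippel} (or equally Corollary~\ref{zippel} or Corollary~\ref{generalized-demillo-lipton-zippel}) bounds the fraction of zeros by $d/(2N+1)$, which tends to $0$. This is exactly how the paper positions the statement -- as a weaker, qualitative consequence of the quantitative results -- and it gives no separate proof, so your argument matches the intended route.
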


{\O}ystein Ore, in 1922, already established the special case of the
Schwartz--Zippel Lemma
(Corollary~\ref{schwartz-zippel})
 when the variables $x_i$ run over all elements of a finite field.
\begin{corollaryQ}
  [Ore 1922~\cite{ore22}, {\cite[Theorem 6.13]{Lidl-Niederreiter}}]
  If $f\in \mathbb F_q[x_1,\ldots,x_n]$
is a polynomial of total degree $d\ge0$
over a finite field $\mathbb F_q$ and
$S_1=S_2=\dots=S_n=\mathbb F_q$,
then the number of nonzeros is at least $(q-d)q^{n-1}$.  
\end{corollaryQ}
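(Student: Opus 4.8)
The plan is to derive Ore's bound as an immediate special case of the Schwartz--Zippel Lemma. Taking $S_1=\dots=S_n=\mathbb{F}_q$ in Corollary~\ref{schwartz-zippel}, we have $|S|=q$, and the hypothesis that $f$ has total degree $d\ge 0$ is exactly the hypothesis of that corollary. Its conclusion gives at least $|S|^n(1-d/|S|)=q^n-dq^{n-1}=(q-d)q^{n-1}$ nonzeros, which is the claimed bound, so within the development of this survey there is nothing more to do. Note in passing that the cruder partial-degree estimate of Corollary~\ref{zippel} would only give $(q-d)^n$, which is weaker since $q-d\le q$, so the total-degree refinement is genuinely needed here.

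Since Ore's result of 1922 predates all of these lemmas, I would also record the short self-contained proof by induction on $n$, using only the fact that a nonzero univariate polynomial of degree $k$ over a field has at most $k$ roots. One may assume $d<q$, since otherwise $(q-d)q^{n-1}\le 0$ and there is nothing to prove. The case $n=1$ is precisely the root bound. For $n\ge 2$, write $f=\sum_{i=0}^{k} g_i(x_1,\dots,x_{n-1})\,x_n^i$, where $k$ is the degree of $f$ in $x_n$, so that $g_k\ne 0$ and every monomial of $g_k$ has total degree at most $d-k$. By the inductive hypothesis applied to $g_k$ in $n-1$ variables, there are at least $(q-(d-k))q^{n-2}$ tuples $(x_1,\dots,x_{n-1})\in\mathbb{F}_q^{n-1}$ with $g_k\ne 0$, and for each such tuple $f$ restricts to a univariate polynomial in $x_n$ of degree exactly $k$, hence has at least $q-k$ nonzeros. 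Multiplying, $f$ has at least $(q-d+k)(q-k)\,q^{n-2}$ nonzeros.

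The only thing left to verify is that this quantity is at least $(q-d)q^{n-1}$, that is, that $(q-d+k)(q-k)\ge (q-d)q$; expanding the left-hand side as $q^2-dq+k(d-k)$, this reduces to $k(d-k)\ge 0$, which holds because $0\le k\le d$. This monotonicity step is the only non-formal point, and it is elementary; the substantive input is simply the univariate root bound, which is where the field hypothesis (more generally, an integral domain would suffice) enters. The familiar finite-field phenomenon that a nonzero polynomial can vanish identically, such as $x^q-x$ on $\mathbb{F}_q$, causes no difficulty, since it forces $d\ge q$, a case already dispatched at the outset.
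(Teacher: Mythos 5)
Your first paragraph is exactly the paper's route: the paper states Ore's result precisely as the specialization $S_1=\dots=S_n=\mathbb F_q$ of Corollary~\ref{schwartz-zippel}, and the computation $q^n(1-d/q)=(q-d)q^{n-1}$ is all there is to it, so that part is correct and identical in approach. Your additional self-contained induction is also correct, and it is a genuinely different (more elementary, historically truer) route: it bypasses the survey's chain Lemma~\ref{main-quantitative} $\to$ Corollary~\ref{Q1} $\to$ Corollary~\ref{schwartz-zippel}, replacing the lexicographically-largest-monomial bookkeeping and the product-to-sum estimate $(1-p_1)\cdots(1-p_n)\ge 1-p_1-\cdots-p_n$ by the single explicit inequality $(q-d+k)(q-k)\ge(q-d)q$, i.e.\ $k(d-k)\ge 0$; the inductive skeleton itself (expand in powers of $x_n$, apply the univariate root bound on the fibres where the top coefficient survives) is the same one the paper uses in Section~\ref{sec:quantitative}. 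What your direct argument buys is independence from the general machinery and a proof that only needs total degree (note your step that the top coefficient $g_k$ has degree at most $d-k$, plus monotonicity of the bound in the degree, which you use correctly); what the paper's route buys is that the statement comes for free as a one-line corollary of results it must prove anyway, and in slightly greater generality (arbitrary equal sets $S$, integral domains). Your edge cases ($d\ge q$ trivial, $k=0$ allowed, nonzero polynomials vanishing as functions forcing $d\ge q$) are all handled correctly.
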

I have not been able to look are Ore's work, 
 and I am citing it
according to~\cite{Lidl-Niederreiter}.




\subsection{Proofs and extensions}
\label{sec:overview}

We give the very easy proofs of Lemmas~\ref{stark}
and \ref{main-quantitative}
in Sections
\ref{sec:stark}
and \ref{sec:quantitative},
respectively.
%
%
Another proof of Lemma \ref{stark}, which is based on
the technique of \emph{trimming} the polynomial,
is given in
Section~\ref{sec:trimming}.
It is the basis for the generalization of
Lemma \ref{stark} in
Section \ref{weaker-existence}.
Yet another proof of
Lemma~\ref{stark} is given
in  Appendix~\ref{proof-tao-vu}. 


In Section~\ref{sec:alon-fueredi},
we study the case where both the total degree and the individual degree
of each variable is constrained: This is
  the Generalized Alon--Füredi Theorem of~\cite{bishnoi_2018}.

The example in Section~\ref{sec:not}
shows that for a maximal
$x_1^{d_1}x_2^{d_2}\ldots x_n^{d_n}$,
the
Quantitative Conclusion in the form~\eqref{bound-equal}
does not follow.
In Section~\ref{sec:weak-conclusion} we explore the question what
quantitative statement we can nevertheless derive.
This question is wide open, and it
 leads to problems of extremal combinatorics and additive
 combinatorics.

There are many other
 extensions of
the Schwartz--Zippel Lemma or
 the Combinatorial Nullstellensatz.
 Among them,
 we mention
a ``multivariate'' generalization with a
quantitative conclusion~\cite{multivariate-SZ-22},
 giving an upper bound on the number of zeros of
$f$ over
$S_1\times S_2 \times \dots \times S_n$,
where the individual sets
$S_i\in K^{\lambda_i}$ are themselves multidimensional,
representing vectors or points or other geometric objects.
This is used to derive 
incidence bounds in combinatorial geometry.

\section{Proof of Lemma \ref{stark} by division by a linear factor}
\label{sec:stark}

We sketch the 
 proof of
Laso\'n~{\cite[Theorem 2]{l-gcn-10}}, which extends the
very simple proof of the original Combinatorial Nullstellensatz
(Corollary~\ref{alon}) that was given by
 Micha\l ek~\cite{michalek-AMM} in 2010.
\begin{proof}[Proof of Lemma \ref{stark}]
We use induction on $d_1+\cdots+d_n$.
The base case $d_1+\cdots+d_n=0$ is obvious. Otherwise, assume
w.l.o.g.\ that $d_1>0$. Pick an element $a\in S_1$ and divide $f$ by
$x_1-a$:
\begin{equation}
  \label{eq:divide}
  f = q(x_1-a)+r
\end{equation}
The remainder $r$ is of degree 0 in $x_1$, i.e., it is a function
$r(x_2,\ldots,x_n)$ and does not depend on $x_1$.
If $r$ has a nonzero on $S_2 \times \dots \times S_n$,
we obtain a nonzero of $f$ by setting $x_1=a$.
Suppose that
 $r$ is zero on all of $S_2 \times \dots \times S_n$. Then we get a nonzero
 of $f$
 by
 finding a nonzero of
 $q(x_1,x_2,\ldots,x_n)$ with $x_1\ne a$. The existence of such a nonzero
in $(S_1\setminus\{a\})\times S_2 \times \dots \times S_n$ 
 is ensured by the inductive hypothesis: It is easy to check that
$x_1^{d_1-1}x_2^{d_2}\ldots x_n^{d_n}$ is indeed a maximal
monomial of the quotient $q$.
\end{proof}

\section{Proof of Lemma \ref{main-quantitative}}
 \label{sec:quantitative}

\begin{proof}[Proof of Lemma \ref{main-quantitative}]

  The proof is by induction on~$n$.
The induction basis for $n=1$ is the elementary fact that a degree-$d$
polynomial has at most $d$ zeros.
For $n>1$,
  we write $f$ in powers of $x_1$:
\begin{equation}\label{lex-induction}
  f(x_1,\ldots,x_n) = \sum_{i=0}^{d_1} x_1^i h_i(x_2,\ldots,x_n)
\end{equation}
The sum contains in particular the nonzero term
$x_1^{d_1} h_{d_1}(x_2,\ldots,x_n)$.
By definition,
 $x_2^{d_2}\ldots x_n^{d_n}$ is the lexicographically largest monomial
 of $h_{d_1}$.
By induction, the number $N$ of tuples $(x_2,\ldots,x_n)\in S_2 \times
\dots \times S_n$
for which $h_{d_1}(x_2,\ldots,x_n)\ne 0$ is at least
$$N\ge 
(|S_2|-d_2)\cdots(|S_n|-d_n).$$
For a fixed $(x_2,\ldots,x_n)$ for which this case arises,
 $f$ is a 
 polynomial of degree 
 $d_1$ in $x_1$. Therefore it has at
most $d_1$ zeros, and at least $|S_1|-d_1$ nonzeros. Consequently,
the number of nonzeros of $f$ is
at least
\begin{displaymath}
  (|S_1|-d_1) N
  \ge   (|S_1|-d_1)(|S_2|-d_2)\cdots(|S_n|-d_n). \qedhere
\end{displaymath}
\end{proof}

\section{Largest total degree does not imply the Quantitative
  Conclusion}
\label{sec:not}

We show that maximality
(Lemma~\ref{main-existence})
and not even 
largest total degree
(Corollary~\ref{alon})
is not sufficient to derive
  the Quantitative Conclusion.
A counterexample is the polynomial $f(x_1,x_2)=x_1^2-x_1x_2+x_2^2-1$,
describing an ellipse in the plane,
and the sets $S_1=S_2=\{-1,0,1\}$,
see Figure~\ref{fig:nullstellensatz-counterexample}.
The monomial
$x_1x_2$ is a monomial
of largest total degree,
and
the
Quantitative Conclusion for $d_1=d_2=1$ would predict at
least
$(|S_1|-d_1)(|S_2|-d_2)=4$ nonzeros on $S_1\times S_2$. However, there
are only~$3$ nonzeros. (In fact, $3$ is the smallest possible number
of nonzeros 
for any polynomial for with $x_1x_2$ as
maximal monomial,
see Proposition~\ref{knuth-weak} in Section~\ref{sec:weak-conclusion}.) 

\begin{figure}[htb]
  \centering
  \includegraphics{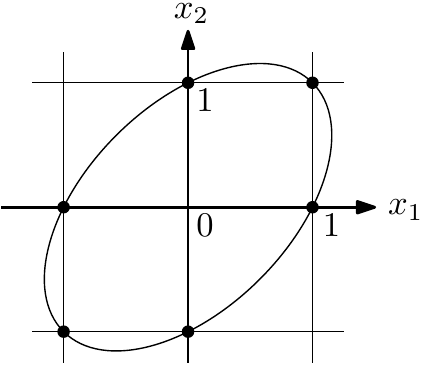}
  \caption{A quadratic bivariate polynomial with 6 zeros on a $3\times 3$ grid}
  \label{fig:nullstellensatz-counterexample}
\end{figure}

\section{Proof  of Lemma \ref{stark}
  by trimming}
\label{sec:trimming}

The Combinatorial Nullstellensatz
is a basic result,
and 
it appears in a wide range of textbooks.
Many of the proofs that I have seen in my
(not very thorough) survey of
the literature proceed
in two steps 
along the following lines.

The first step reduces the polynomial $f$ to a \emph{trimmed}
polynomial, 
whose degree \emph{in each variable} is now less than
$|S_i|$, without changing the value of~$f$ on
$S_1\times S_2 \times \dots \times S_n$;
After this reduction,
one can apply any of the lemmas with the
Quantitative Conclusion.

We include this proof because it lends itself to a generalization,
Theorem~\ref{schauz}
in Section~\ref{weaker-existence}.

The trimming procedure is described in the following statement:
\begin{proposition}
  \label{proposition-trimming}
Let
  $f\in K[x_1,\ldots,x_n]$ be a polynomial over a
  commutative ring $K$, and
  let $S_1,\ldots,S_n\subseteq K$ be 
  sets.

  Then $f$
  can be transformed into a polynomial $\hat f$ with the following
  properties:
  \begin{enumerate}
  \item $f$ and $\hat f$ have the same values on
     $S_1\times S_2 \times \dots \times S_n$.
   \item
     In $\hat f$, the
     degree {in each variable} $x_i$ is less than
$|S_i|$.
  \item If $x_1^{e_1}\ldots x_n^{e_n}$ is a maximal monomial of $f$
    with $e_i< |S_i|$ for all~$i$, then its coefficient remains
    unchanged by this transformation.
    \label{untouched}
  \end{enumerate}
\end{proposition}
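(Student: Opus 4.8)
The plan is to perform the trimming variable by variable. The key idea is a standard reduction: for each variable $x_i$, the polynomial $g_i(x_i) := \prod_{a \in S_i}(x_i - a)$ vanishes on all of $S_i$, so we may repeatedly subtract multiples of $g_i$ (times monomials in the other variables) to drive down the degree in $x_i$ without affecting the values on $S_1 \times \cdots \times S_n$. Concretely, I would define $\hat f$ by processing $i = 1, 2, \ldots, n$ in turn, and at step $i$ rewriting $f$ modulo $g_i(x_i)$ so that every exponent of $x_i$ becomes at most $|S_i| - 1$. Each such reduction replaces a monomial $x_i^{e}$ with $e \ge |S_i|$ by a $K$-linear combination of strictly smaller powers of $x_i$ (with coefficients that are polynomials in the remaining variables), so after finitely many steps the degree in $x_i$ drops below $|S_i|$; and since the reductions for later variables only involve $g_j(x_j)$ with $j > i$, they never reintroduce high powers of $x_i$. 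This gives properties (1) and (2).

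For property (3), the main point is to observe what the reduction does to the \emph{exponent vectors}. Replacing $x_i^{e}$ by smaller powers of $x_i$ (modulo $g_i$) does not increase any exponent: the highest-degree term of $g_i(x_i)$ is $x_i^{|S_i|}$ with coefficient $1$, so reducing $x_i^{e} \cdot m$ (for a monomial $m$ in the other variables) replaces it by $x_i^{e} m - x_i^{e-|S_i|} m \cdot g_i(x_i)$, whose monomials all have $x_i$-degree at most $e-1$ and the \emph{same} degree in every other variable as the monomials of $m$. Hence every monomial appearing anywhere during the procedure is coordinatewise $\le$ some monomial of the original $f$. Now suppose $x_1^{e_1}\cdots x_n^{e_n}$ is a maximal monomial of $f$ with all $e_i < |S_i|$. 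Since it is maximal, no other monomial of $f$ dominates it coordinatewise, so the only way a reduction step could alter its coefficient is if that step \emph{produces} the monomial $x_1^{e_1}\cdots x_n^{e_n}$ from a strictly larger one — but a reduction of $x_j^{e} m$ only produces monomials with $x_j$-degree $\le e - 1 < |S_j|$ (once we are reducing modulo $g_j$, we only touch monomials with $x_j$-degree $\ge |S_j|$), and it produces nothing dominating the monomial it acts on in the other coordinates. So the exponent vector $(e_1,\ldots,e_n)$, being $< |S_i|$ in coordinate $i$ and not dominated by any monomial of $f$, is never the target of a reduction and never gets a contribution from one; its coefficient is preserved.

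The step I expect to be the most delicate to write cleanly is property (3) — specifically, making precise the bookkeeping that the coefficient of the maximal monomial is never changed. One has to argue both that the monomial is never directly reduced (because its $x_i$-exponents are all below $|S_i|$, so it is already ``legal'' for every variable and the reduction steps only act on monomials with some exponent $\ge |S_i|$) and that it never receives a spurious contribution from reducing some other monomial. The cleanest framing is probably to maintain the invariant that, after each reduction step, every monomial of the current polynomial either equals a monomial of the original $f$ or is coordinatewise $\le$ (and $\ne$, in at least one coordinate where it is strictly smaller) some monomial of $f$ with a larger $x_i$-exponent for the variable currently being processed; combined with maximality this pins down the coefficient. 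The degree and value claims (1)–(2) are routine given the observation that $f(s) = \hat f(s)$ for $s \in S_1 \times \cdots \times S_n$ because we only ever subtracted multiples of polynomials vanishing on those sets, and termination follows because each reduction strictly decreases the $x_i$-degree of the monomial it acts on.
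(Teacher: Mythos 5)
Your proposal is correct and follows essentially the same route as the paper: reduce modulo $\prod_{a\in S_i}(x_i-a)$ variable by variable (equivalently, replace $x_i^{|S_i|}$ by a lower-degree polynomial with the same values on $S_i$), and for property (3) observe that each reduction only produces monomials coordinatewise dominated by the one it acts on, so a maximal monomial with all exponents below $|S_i|$ is neither touched nor receives contributions. Your write-up merely spells out the domination invariant that the paper's proof states more tersely.
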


\begin{proof}

  


Let $s_i=|S_i|$.
The polynomials
$x_i^{s_i}$ and
$x_i^{s_i}-\prod_{a\in S_i}(x_i-a)$
have the same values for all
  $x\in S_i$.
  Hence, we may successively replace
  $x_i^{s_i}$ by the polynomial
  $x_i^{s_i}-\prod_{a\in S_i}(x_i-a)$,
 whose degree is smaller than~$s_i$,
  and in this way, eliminate all
  powers of $x_i$ of degree $s_i$ or higher, without changing the value of $f$
  on $S_1\times S_2 \times \dots \times S_n$.
  (Putting it differently, we divide $f$ by $\prod_{a\in S_i}(x_i-a)$
  and take the remainder.)
  
  If we do this for all variables, we arrive at a polynomial
  $\tilde f$ for which the degree in each variable $x_i$ is less than
  $s_i$.

To see Property~\ref{untouched}, we observe that
 the modification, applied to a term
  $x_1^{e_1}x_2^{e_2}\ldots x_n^{e_n}$, only
  affects the coefficients of monomials
  $x_1^{b_1}x_2^{b_2}\ldots x_n^{b_n}$ with $b_i\le e_i$ for all $i$.
A monomial
$x_1^{e_1}x_2^{e_2}\ldots x_n^{e_n}$ with $e_i< s_i 
$ for all
  $i$
  is itself not subject to the trimming procedure, and
if it is  maximal, it has
  no monomials ``above it'' that could change its coefficient.
\end{proof}

Since the degree $d_i$ in each variable $x_i$ is now less than
$|S_i|$, we can apply
Corollary~\ref{generalized-demillo-lipton-zippel},
which has an easy inductive proof along the lines of the proof
of Lemma~\ref{main-quantitative} shown in Section~\ref{sec:quantitative},
or we may pick
a lexicographically
largest monomial and apply
Lemma~\ref{main-quantitative} directly.

\subsection{Comparison of the proofs}

It is instructive to compare the two proofs
of Lemma~\ref{stark} that
we have seen. 
The trimming procedure is essentially a polynomial division,
and it reduces the polynomial to a polynomial for which the
Quantitative Conclusion holds.
To prove the
Quantitative Conclusion, one applies induction on the number of variables,
as in the proof of
Lemma~\ref{main-quantitative} (Section~\ref{sec:quantitative}).
The 
induction step
is based on the fact
that a univariate polynomial of degree $d$ has at most $d$
roots.
This fact, finally, is proved by
repeated division by a linear factor.




By contrast, the proof of Section~\ref{sec:stark}, which goes back to
Micha\l ek~\cite{michalek-AMM}, puts the division by a linear factor
at the very beginning.  As we have seen, this makes the proof simple
and direct.

In  Appendix~\ref{proof-tao-vu}, we give another proof.
It follows the suggested hint for the solution of
Exercise~9.1.4
in Tao and Vu\cite[
p.~332]
{tv-ac-06},
and it is the earliest proof of
Lemma~\ref{stark}.
In contrast to the other proofs, it works only for fields.

%




\section{Weaker assumptions}
\label{weaker}

There is a way in which the respective assumptions of
Lemma~\ref{main-quantitative} and Lemma~\ref{main-existence} can be
weakened.  The two variations of the assumptions were developed
independently, but they are remarkably similar in spirit, and the
relation between them is analogous to the relation between
lexicographically largest and maximal monomials.  The assumptions
are
not easy to understand,
and they
are motivated mainly by the fact that the original proofs
carry through with few changes. 

\subsection{Successively largest 
  sequences for the Quantitative Conclusion}
\label{weaker-quantitative}

We define a more general notion than a lexicographically largest
monomial, namely what we call
 a \emph{successively largest sequence}
 $(d_1,\ldots,d_n)$ of exponents: 
%
Pick  \emph{any}
monomial $x_1^{e_1}x_2^{e_2}\ldots x_n^{e_n}$ of $f$.
We set
$f_1$ to be
the original polynomial
$f_1(x_1,\ldots,x_n)=f(x_1,\ldots,x_n)$.
For $j=2,\ldots, n$,
we inductively define $f_{j}(x_{j},\ldots,x_n)$
as the 
coefficient of
 $x_{j-1}^{e_{j-1}}$
 in
 $f_{j-1}(x_{j-1},\ldots,x_n)$.
 
Finally, we let 
$d_j$ be the degree of $x_j$ in $f_j$, for $j=1,\ldots,n$.



Consider, for example, the polynomial
$f(x_1,x_2)=x_1^7+x_1^6x_2^9+x_1x_2^2+x_1x_2+x_2^6$.
Picking the term $x_1x_2$ leads to $f_2(x_2)=x_2^2+x_2$, and thus
a {successively largest sequence}
$(d_1,d_2)=(7,2)$.
For the term $x_2^6$, 
we get 
$(d_1,d_2)=(7,6)$.
Figure~\ref{succ-largest} shows another example:
$(d_1,d_2)=(4,2)$ is
a {successively largest sequence} with respect to the monomial
$xy$. 

Note that $x_1^{d_1}x_2^{d_2}\ldots x_n^{d_n}$ is not necessarily a
monomial of $f$.
%
   As with the {lexicographically largest} monomial,
   this notion
 depends on the chosen order of the variables.

\begin{theorem}
  [Knuth 1998 
  {\cite[Answer to
    Ex.~4.6.1--16, pp.~674--675]{knuth2}}]
  \label{theorem-knuth}
For a \emph{successively largest sequence}
$d_1,\ldots,d_n$, the Quantitative Conclusion holds.  
\end{theorem}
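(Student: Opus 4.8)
The plan is to mimic the inductive proof of Lemma~\ref{main-quantitative} in Section~\ref{sec:quantitative} almost verbatim, with one change: in the induction step one works not with the leading coefficient of $f$ in $x_1$, but with the coefficient picked out by the chosen monomial. I would induct on $n$. The base case $n=1$ is unchanged: here $d_1$ is the ordinary degree of the univariate polynomial $f$, which therefore has at most $d_1$ roots and hence at least $|S_1|-d_1$ nonzeros in $S_1$.

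For $n>1$, let $x_1^{e_1}x_2^{e_2}\cdots x_n^{e_n}$ be the monomial of $f$ underlying the given successively largest sequence, and expand $f$ in powers of $x_1$,
\[
  f(x_1,\ldots,x_n)=\sum_{i=0}^{d_1} x_1^i\, h_i(x_2,\ldots,x_n),
\]
where $d_1$ is the degree of $x_1$ in $f$ (note $e_1\le d_1$, so $h_{e_1}$ is among these coefficients). By construction $h_{e_1}$ is exactly $f_2(x_2,\ldots,x_n)$, and since $x_1^{e_1}x_2^{e_2}\cdots x_n^{e_n}$ is a monomial of $f$, the monomial $x_2^{e_2}\cdots x_n^{e_n}$ occurs in $f_2$; in particular $f_2\ne 0$. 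Unwinding the inductive definition, the sequence $(d_2,\ldots,d_n)$ together with the monomial $x_2^{e_2}\cdots x_n^{e_n}$ is precisely a successively largest sequence for the $(n-1)$-variable polynomial $f_2$.

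Now I would apply the induction hypothesis to $f_2$: the number $N$ of tuples $(x_2,\ldots,x_n)\in S_2\times\cdots\times S_n$ with $f_2(x_2,\ldots,x_n)\ne 0$ is at least $(|S_2|-d_2)\cdots(|S_n|-d_n)$. For any such tuple, the univariate polynomial $x_1\mapsto f(x_1,x_2,\ldots,x_n)$ has the coefficient $f_2(x_2,\ldots,x_n)\ne0$ in front of $x_1^{e_1}$, so it is not the zero polynomial; its $x_1$-degree being at most $d_1$, it has at least $|S_1|-d_1$ nonzeros in $S_1$. Summing over the $N$ good tuples, $f$ has at least $(|S_1|-d_1)N\ge (|S_1|-d_1)(|S_2|-d_2)\cdots(|S_n|-d_n)$ nonzeros on $S_1\times\cdots\times S_n$, as claimed.

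There is essentially no hard step; the only thing requiring care is the bookkeeping claim that the objects $f_2,\ldots,f_n$ and exponents $d_2,\ldots,d_n$ generated from $f$ coincide with those generated afresh from $f_2$, which follows immediately by unwinding the recursive definition. The conceptual reason the argument succeeds --- and the point where it parts ways with the lexicographically-largest case --- is that at a fixed point $(x_2,\ldots,x_n)$ one does not need the \emph{leading} $x_1$-coefficient of $f$ to be nonzero, only \emph{some} coefficient; the chosen monomial supplies one, namely $f_2$, whose nonzero set is controlled by induction while the degree bound $d_1$ in $x_1$ is left intact.
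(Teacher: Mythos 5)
Your proof is correct and is essentially identical to the paper's own argument: both adapt the induction of Lemma~\ref{main-quantitative}, using the coefficient $f_2$ of $x_1^{e_1}$ (rather than the leading coefficient) as the nonzero witness, and both note that the univariate specialization has degree at most $d_1$, which still gives at most $d_1$ roots. No gaps; nothing further to add.
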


\begin{proof}
The proof of Lemma~\ref{main-quantitative} goes through with
straightforward adaptations.
We proceed by induction on~$n$.
We write $f$ in
  powers of $x_1$ as in~\eqref{lex-induction}:
\begin{align*}
  f(x_1,\ldots,x_n) = \sum_{i=0}^{d_1} x_1^i h_i(x_2,\ldots,x_n)
\end{align*}
 By assumption,
 the sum contains 
 the nonzero term
$x_1^{e_1} f_2(x_2,\ldots,x_n)$.
By definition, $(d_2,\ldots,d_n)$ is a {successively largest sequence}
for $f_2$.

For a fixed tuple
$(x_2,\ldots,x_n)$ with 
$f_2(x_2,\ldots,x_n)\ne0$,
$f$ is a nonzero
polynomial of degree {at most} $d_1$ in $x_1$.
In contrast to the case of Lemma~\ref{main-quantitative},
 the degree can be smaller than $d_1$, but the conclusion that
 $f$ has hat most $d_1$~zeros remains valid.
The argument finishes in the same way as for Lemma~\ref{main-quantitative}. 
\end{proof}

Knuth 
\cite[
p.~675]{knuth2} mentions further ideas of
strengthening the bound, and points out the significance in the
context of sparse polynomials.

\subsection{Weaker assumptions for the Existence Conclusion}
\label{weaker-existence}
 
%

\begin{theorem}[Schauz 2008 {\cite[Theorem~3.2(ii)]{schauz-08}}]
  \label{schauz}
  Assume  $|S_i|>d_i\ge e_i$ for $i=1,\ldots,n$, and
 assume that
 $x_1^{e_1}\ldots x_n^{e_n}$ is a monomial of $f$.
If $f$
contains
no other monomial 
$x_1^{e'_1}\ldots x_n^{e'_n}$ with
$e'_i=e_i$ or $e'_i> d_i$ 
for each
$i=1,\ldots,n$, then the
Existence Conclusion holds. 
\end{theorem}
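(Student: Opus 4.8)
The plan is to mimic the trimming-plus-induction strategy that already proved Lemma~\ref{stark}, but to carry the ``forbidden monomial'' bookkeeping through the argument. First I would apply the trimming procedure of Proposition~\ref{proposition-trimming} to $f$, obtaining $\hat f$ with the same values on $S_1\times\cdots\times S_n$ and with degree in each variable $x_i$ less than $|S_i|$. Since $d_i<|S_i|$ and $e_i\le d_i<|S_i|$, the monomial $x_1^{e_1}\cdots x_n^{e_n}$ is a maximal monomial of $f$ with all exponents below $|S_i|$ (here one needs that no monomial of $f$ strictly dominates it — this follows from the hypothesis, taking $e_i'\ge e_i$ for all $i$ with strict inequality somewhere), so by Property~\ref{untouched} its coefficient survives in $\hat f$. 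It remains to show that the Existence Conclusion holds for $\hat f$, i.e.\ that $\hat f$ is not identically zero on $S_1\times\cdots\times S_n$; for that it suffices to exhibit a lexicographically largest monomial of $\hat f$ whose exponent vector $(d_1',\ldots,d_n')$ satisfies $d_i'<|S_i|$ and then invoke Lemma~\ref{main-existence} (or Lemma~\ref{main-quantitative}).

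The heart of the matter is to understand which monomials $\hat f$ can contain. The trimming step only replaces a power $x_i^{s_i}$ by a polynomial of strictly smaller degree in $x_i$ (and unchanged in the other variables), so every monomial of $\hat f$ is coordinatewise $\le$ some monomial of $f$. I would argue that $\hat f$ still contains no monomial $x_1^{e_1'}\cdots x_n^{e_n'}$ with $e_i'=e_i$ or $e_i'>d_i$ for every $i$: indeed such a monomial of $\hat f$ would be dominated by a monomial $x_1^{c_1}\cdots x_n^{c_n}$ of $f$ with $c_i\ge e_i'$, and then for each $i$ either $c_i\ge e_i'>d_i$ (if $e_i'>d_i$) or $c_i\ge e_i'=e_i$; in the latter case either $c_i=e_i$ or $c_i>e_i$, and $c_i>e_i$ together with $c_i\le |S_i|-1$ would still need ruling out — here is exactly where the hypothesis ``$e_i'=e_i$ or $e_i'>d_i$'' on monomials of $f$ must be leveraged, possibly after noting that in $\hat f$ a trimmed monomial has $c_i\le s_i-1$ and re-examining cases. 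Once this invariant is established for $\hat f$, and because the degree of $x_i$ in $\hat f$ is at most $|S_i|-1$ in every variable, one chooses the lexicographically largest monomial of $\hat f$ in the order $x_1,x_2,\ldots,x_n$: at each stage the largest available exponent in $x_i$ is, by the invariant, either equal to $e_i$ or at most $d_i$, hence at most $|S_i|-1$. Wait — one must check the induction actually produces exponents $\le d_i$: if at some coordinate the lex-largest choice forces the exponent to be exactly $e_j$ for the surviving restricted polynomial, that is fine since $e_j\le d_j$; if it is $>d_j$ it would violate the invariant for the monomial being built. So the lex-largest monomial of $\hat f$ has all exponents $\le d_i<|S_i|$, and Lemma~\ref{main-quantitative} (or just Lemma~\ref{main-existence}) applied to $\hat f$ yields a nonzero, which is also a nonzero of $f$.

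The main obstacle I anticipate is the case analysis in the previous paragraph: disentangling the condition ``$e_i'=e_i$ \emph{or} $e_i'>d_i$ for each $i$'' (a disjunction that can be resolved differently in different coordinates) and verifying it is preserved under trimming, then verifying that the greedy lexicographic construction cannot produce a ``bad'' monomial. One has to be careful that a trimmed monomial dominated by $x_1^{e_1}\cdots x_n^{e_n}$ itself causes no trouble (it does not, since a monomial coordinatewise below the designated one, and different from it, fails the disjunction in the coordinate where it is strictly below, as there the exponent is $<e_i\le d_i$, hence neither $=e_i$ nor $>d_i$). An alternative, cleaner route — which is probably how Schauz~\cite[Theorem~3.2(ii)]{schauz-08} proceeds — is to skip trimming entirely and run the division-by-a-linear-factor induction of Section~\ref{sec:stark} directly, tracking how the quotient's monomials relate to $f$'s: dividing by $x_1-a$ and splitting into the remainder $r(x_2,\ldots,x_n)$ versus the quotient $q$, one checks that the triple $(d_1-1,d_2,\ldots,d_n)\ge(e_1-1,e_2,\ldots,e_n)$ still satisfies the hypotheses for $q$ (the monomial $x_1^{e_1-1}x_2^{e_2}\cdots x_n^{e_n}$ of $q$ is witnessed, and the forbidden-monomial condition for $q$ follows because a monomial of $q$ lifts to a monomial of $f$ with $x_1$-exponent one higher). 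Either way the proof is short once the combinatorial invariant is pinned down; I would present the division version for uniformity with Section~\ref{sec:stark} but note that the trimming version also works and connects to Theorem~\ref{schauz}'s role in Section~\ref{weaker-existence}.
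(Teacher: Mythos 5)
Your overall plan (trim via Proposition~\ref{proposition-trimming}, then apply a quantitative lemma to the trimmed polynomial) is indeed the paper's route, but the step everything hinges on is justified incorrectly. You claim that $x_1^{e_1}\cdots x_n^{e_n}$ is a \emph{maximal} monomial of $f$, deducing this from the hypothesis; that deduction is false, because a monomial dominating $x_1^{e_1}\cdots x_n^{e_n}$ may have $e_i<e_i'\le d_i$ in some coordinate and is then not excluded. The paper's own example $f=x_1^4x_2^8+x_1x_2+x_1^6x_2^2$ with $(e_1,e_2)=(1,1)$, $(d_1,d_2)=(4,2)$ shows this: $x_1x_2$ divides both other monomials, so it is not maximal, and Property~\ref{untouched} of Proposition~\ref{proposition-trimming} cannot be invoked as stated. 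The argument that actually works --- and which you circle around in your second paragraph without ever making it the justification for coefficient survival --- examines the trimming mechanism itself: trimming a monomial $x_1^{c_1}\cdots x_n^{c_n}$ leaves every power with $c_i<|S_i|$ untouched and only lowers powers with $c_i\ge|S_i|$, so a monomial of $f$ can disturb the coefficient of $x_1^{e_1}\cdots x_n^{e_n}$ only if in every coordinate $c_i=e_i$ or $c_i\ge|S_i|>d_i$; every such monomial other than $x_1^{e_1}\cdots x_n^{e_n}$ itself is forbidden by the hypothesis, hence the coefficient survives, $\hat f\ne0$, and since all degrees of $\hat f$ are below $|S_i|$, Corollary~\ref{generalized-demillo-lipton-zippel} (or Lemma~\ref{main-quantitative} applied to the lexicographically largest monomial of $\hat f$) yields a nonzero. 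Your additional effort to show that the lexicographically largest monomial of $\hat f$ has exponents $\le d_i$ is unnecessary: exponents $<|S_i|$, which trimming guarantees automatically, are all that is needed.

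The ``cleaner'' division-by-a-linear-factor route that you say you would actually present does not go through as sketched. A monomial $x_1^{j}x_2^{b_2}\cdots x_n^{b_n}$ of the quotient $q$ only guarantees that $f$ has \emph{some} monomial with the same exponents in $x_2,\dots,x_n$ and $x_1$-exponent $>j$, and the Schauz condition is not inherited by $q$ for the parameters $(e_1-1,e_2,\dots,e_n)$ and $(d_1-1,d_2,\dots,d_n)$: for $f=x_1x_2+x_1^2x_2^3$ with $(e_1,e_2)=(1,1)$, $(d_1,d_2)=(2,2)$ the hypothesis holds, yet dividing by $x_1-a$ with $a\ne0$ gives $q=x_2+ax_2^3+x_1x_2^3$, whose monomial $x_2^3$ is forbidden for the new parameters. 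Moreover the witness monomial can disappear: for $f=x_1x_2+x_1^2x_2$ and $a=-1$ one gets $q=x_1x_2$, so $x_1^{e_1-1}x_2^{e_2}$ is not a monomial of $q$ at all. So that branch would require a genuinely different and more careful induction, not a routine copy of Section~\ref{sec:stark}.
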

Figure~\ref{fig-d-leading} illustrates this condition.
In the terminology of Schauz,
the tuple %
$(e_1,\ldots,e_n)$ is called a
``$(d_1,\ldots,d_n)$-leading multi-index''.
The term
$x_1^{d_1}\ldots x_n^{d_n}$ is not required to appear in $f$. 

Theorem~\ref{schauz} may be stronger than Lemma~\ref{main-existence}.
For example, for the polynomial
\begin{displaymath}
  f(x_1,x_2)= x_1^4x_2^8 + x_1x_2 + x_1^6x_2^2,
\end{displaymath}
which is a sparser variant of the polynomial in
Figure~\ref{fig-d-leading},
we may take $(e_1,e_2)=(1,1)$ and  $(d_1,d_2)=(4,2)$.

The forbidden exponent pairs can be written concisely as
$\{e_1,d_1+1,d_1+2,d_1+3,\ldots\} \times
\{e_2,d_2+1,d_2+2,d_2+3,\ldots\}$,
except $(e_1,e_2)$ itself.

\begin{proof}[Proof of Theorem~\ref{schauz}]  
The proof by {trimming}
from Section~\ref{sec:trimming} goes through:
Observe that trimming a monomial
$x_1^{c_1}x_2^{c_2}\ldots x_n^{c_n}$ creates
 monomials in which the powers $x_i^{c_i}$ with $c_i<|S_i|$ are
 unchanged. Only the powers $x_i^{c_i}$ with $c_i\ge|S_i|$ are
 replaced by smaller powers.
Thus, the monomials $x_1^{e'_1}\ldots x_n^{e'_n}$ that are excluded by
the assumption of
Theorem~\ref{schauz} 
are precisely those
monomials 
whose
trimming process could affect
the chosen monomial $x_1^{e_1}\ldots x_n^{e_n}$.
%
\end{proof}

Schauz showed the stronger statement that the coefficient of
$x_1^{e_1}\ldots x_n^{e_n}$ can be represented in terms of
the values of $f$ on $S_1\times S_2 \times \dots \times S_n$,
thus generalizing the coefficient formula~\eqref{eq:F-tilde}
in Appendix~\ref{proof-tao-vu}.
For further information and more references, see \cite{clark-14}. 


\subsection{Connections between the assumptions}
\label{sec:connections}

There is a connection between Theorems
\ref{theorem-knuth} and~\ref{schauz}:
The assumptions of the first theorem imply
the assumptions of the second.
In particular,
  if
$(d_1,\ldots,d_n)$ is a successively largest degree sequence with
respect to the monomial
$x_1^{e_1}\ldots x_n^{e_n}$,
then
the assumptions of 
Theorem~\ref{schauz} hold.

Looking at the top two rows of
Figure~\ref{fig:conditions}, 
one can notice
some general pattern:
The conditions for the Quantitative Conclusion in the left column
(lexicographically largest monomial, 
successively largest sequence)
depend on the ordering of the variables,
whereas the conditions for the Existence Conclusion in the right column
(maximal monomial, 
the
$(d_1,\ldots,d_n)$-leading multi-index of
Theorem~\ref{schauz})
are insensitive to the variable order.

One can observe (and prove) the following curious connection
between the forbidden monomials, which are shown as shaded regions
of Figure~\ref{fig:conditions}:
The forbidden terms for
$x_1^{d_1}x_2^{d_2}\ldots x_n^{d_n}$
being
a maximal monomial
can be obtained as the
 intersection
 of the
  forbidden terms for 
  being
  a lexicographically largest monomial over all $n!$ orderings
  of the variables.

The same relation holds between
a
successively largest sequence
(Theorem~\ref{theorem-knuth})
and the condition 
of
Theorem~\ref{schauz},
if the defining monomial $x_1^{e_1}
\ldots x_n^{e_n}$ is held fixed.

\subsection{Applications of the generalized results}
\label{sec:applications}

In the applications of the 
Combinatorial Nullstellensatz or the Schwartz--Zippel Lemma and its
relatives,
the degree bounds on
the polynomial $f$ are derived a priori, and not by looking at a
particular polynomial that is explicitly given.
Thus, the added generality offered
by Theorems
\ref{theorem-knuth} and~\ref{schauz}
is only academic and of little practical use.
Even for the
Generalized Combinatorial Nullstellensatz
(Lemma~\ref{main-existence}),
we are not aware of a convincing 
application 
for which the classic
Combinatorial Nullstellensatz
(Corollary~\ref{alon}) would not suffice.

Such an application was indeed given by
Laso\'n~{\cite[Theorem 4]{l-gcn-10}}, but it
appears somewhat
fabricated.
The polynomial can be obtained
from some homogeneous polynomial $h(x_1,\dots,x_n)$
by
replacing each variable
$x_i$ by some polynomial $f_i(x_i)$
(and adding some linear terms).
In a homogeneous polynomial, every monomial is both maximal and of maximum
total degree, but after the modification,
the terms acquire different degrees,
and Corollary~\ref{alon} no longer applies.


\section{Stronger 
  constraints: The Generalized Alon--Füredi Theorem}
\label{sec:alon-fueredi}

 Bishnoi, Clark, Potukuchi, and Schmitt~
 \cite{bishnoi_2018}
 give a precise bound on the minimum number of
 nonzeros when,
in addition to a bound $d_i$ on the degree of each variable $x_i$, the total
degree $d$ is specified. The bound is not explicit:
It is formulated in terms of an optimization problem of
 minimizing the product of variables $y_i$ under linear
 constraints.
\begin{theorem}[The Generalized Alon--Füredi Theorem,
{Bishnoi et al.~
  \cite{bishnoi_2018}}]
\label{gen-AF}
Let $f$ be a polynomial
of total degree $d$,
whose degree in each variable $x_i$ is
at most~$d_i$, where $d_i<|S_i|$.
 Then $f$ has at least $N$~%
nonzeros
on $S_1\times S_2 \times \dots \times S_n$,
 where $N$ is
the optimum value of the following minimization problem:
 \begin{align}
   \mathrm{minimize \ \ }& y_1y_2\ldots y_n\label{product}\\
   \mathrm{subject\ to \ \ }& |S_i|-d_i\le y_i\le |S_i|,\ \mathrm{for}
\     i=1,\ldots,n \label{range}\\
&   \sum_{i=1}^n y_i = |S_1|+\cdots+|S_n|-d \label{sum}
 \end{align}
\end{theorem}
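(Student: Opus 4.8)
The plan is to prove the Generalized Alon--Füredi Theorem by induction on $n$, in the same spirit as the proof of Lemma~\ref{main-quantitative}, but now carrying along the total-degree constraint as well. First I would dispose of the trivial case in which $f$ is identically zero (then the stated minimum value may still be positive, but the hypothesis forces $d\ge 0$ and one checks the problem is infeasible unless $d\ge |S_1|+\cdots+|S_n|-\min_i(|S_i|-d_i)\cdot\ldots$; more cleanly, one observes that if $f$ has total degree $d$ then $f\ne 0$, so this case does not arise). For $n=1$, a nonzero univariate polynomial of total degree $d$ with $d<|S_1|$ has at most $d$ zeros and hence at least $|S_1|-d$ nonzeros; and since $d\ge0$, the only feasible $y_1$ in the minimization problem is $y_1=|S_1|-d$ (forced by \eqref{sum}), which lies in the range \eqref{range} precisely because $d_1<|S_1|$ and $d\le d_1$. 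So $N=|S_1|-d$ and the base case holds.

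For the inductive step I would write $f=\sum_{i=0}^{e}x_1^i h_i(x_2,\ldots,x_n)$ where $e\le d_1$ is the actual degree of $x_1$ in $f$, and let $h_e$ be the leading coefficient, a nonzero polynomial in $x_2,\ldots,x_n$ of total degree at most $d-e$, with degree in each $x_j$ at most $d_j$. By the inductive hypothesis applied to $h_e$ with parameters $d_j$ (for $j\ge2$) and total degree bound $d-e$, the number $M$ of tuples $(x_2,\ldots,x_n)\in S_2\times\cdots\times S_n$ with $h_e\ne0$ is at least the optimum $N'$ of the $(n-1)$-variable problem with right-hand side $|S_2|+\cdots+|S_n|-(d-e)$. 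For each such tuple, $f$ is a nonzero polynomial of degree exactly $e$ in $x_1$, hence has at least $|S_1|-e$ nonzeros; so the number of nonzeros of $f$ is at least $(|S_1|-e)\cdot N'$. The remaining task is purely about the optimization problem: one must show that $(|S_1|-e)\cdot N'$ is bounded below by the optimum $N$ of the full $n$-variable problem, using $0\le e\le d_1<|S_1|$ and $e\le d$ (the latter since $h_e\ne0$ forces $d-e\ge 0$, i.e.\ the $(n-1)$-subproblem is feasible). This reduces to: for any feasible $y_1=|S_1|-e$ of the original problem, the product $(|S_1|-e)\cdot(\text{opt of subproblem with sum }|S_2|+\cdots+|S_n|-(d-e))$ is at least the global optimum — which is immediate, since fixing $y_1=|S_1|-e$ in the original problem and optimizing over $y_2,\ldots,y_n$ yields exactly that quantity, and it is $\ge$ the unconstrained optimum $N$.

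The one subtlety I expect to be the main obstacle — and the reason the theorem is phrased via an optimization problem rather than a closed formula — is verifying \emph{feasibility} of the subproblem and of the induced restriction at the right moment: one needs $|S_2|+\cdots+|S_n|-(d-e)$ to lie in the achievable range $[\,\sum_{j\ge2}(|S_j|-d_j),\ \sum_{j\ge2}|S_j|\,]$ so that the inductive hypothesis applies. The upper bound holds because $d-e\ge0$. The lower bound, $d-e\le \sum_{j\ge2}d_j$, need not hold for every decomposition; but it does hold for the relevant $e$, because $h_e$ genuinely has degree at most $d_j$ in each $x_j$ and total degree at most $d-e$, and if $d-e$ exceeded $\sum_{j\ge2}d_j$ one could increase $e$ — more carefully, one argues that the value of the subproblem with an out-of-range right-hand side should be interpreted by clamping, and clamping only helps the bound. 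I would handle this by a short lemma: the optimum of the product-minimization problem is a nonincreasing, log-convex function of the sum-parameter on the feasible interval, extended by the boundary value outside it; monotonicity then gives the inequality $(|S_1|-e)N' \ge N$ directly. Finally, I would remark that tightness follows from the polynomial $\prod_{i}\prod_{a\in A_i}(x_i-a)$ of \eqref{eq:tight} suitably modified (dropping factors) to realize any extreme point of the polytope defined by \eqref{range}--\eqref{sum}, so the bound $N$ cannot be improved.
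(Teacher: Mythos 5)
Your argument is essentially correct, but it is a different proof from the one in the paper: you prove the theorem directly by induction on $n$ (which is in fact the route of Bishnoi et al., as remarked after the proof), whereas the paper deduces it from Lemma~\ref{main-quantitative} in one step. The paper's proof takes the lexicographically largest monomial $x_1^{e_1}\cdots x_n^{e_n}$ of $f$, notes $e_i\le d_i$ and $\sum_i e_i\le d$, observes that the optimum $N$ is unchanged if \eqref{sum} is relaxed to the inequality $\sum_i y_i\ge |S_1|+\cdots+|S_n|-d$ (using $d\le\sum_i d_i$ to push some $y_i$ down), and then checks that $y_i:=|S_i|-e_i$ is feasible for the relaxed problem, so Lemma~\ref{main-quantitative} gives at least $\prod_i y_i\ge N$ nonzeros. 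Your induction peels off the leading coefficient $h_e$ in $x_1$ and then compares $(|S_1|-e)\cdot N'$ with $N$ by fixing $y_1=|S_1|-e$; that last step is indeed just feasible-set restriction and needs no monotonicity, but you do need monotonicity of the optimum in the degree parameter at the point where you apply the inductive hypothesis, since the actual total degree of $h_e$ may be strictly smaller than $d-e$ --- and that monotonicity is proved by exactly the paper's relaxation trick, so the two proofs share this ingredient. Two loose ends in your write-up can be tightened: the feasibility bound $d-e\le d_2+\cdots+d_n$ follows cleanly because some monomial of $f$ has total degree exactly $d$, hence $x_1$-exponent at least $d-\sum_{j\ge2}d_j$, and $e$ is at least that exponent (your ``one could increase $e$'' remark is too vague as stated, though your clamping fallback also works); and the closing tightness claim is not part of the statement and, as the paper notes, is due to Bishnoi et al. What the comparison buys: your self-contained induction reproduces the original proof and keeps the LP bookkeeping inside the induction, while the paper's reduction is shorter because all the induction is already encapsulated in Lemma~\ref{main-quantitative}.
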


\begin{figure}[htb]
  \centering
  \includegraphics{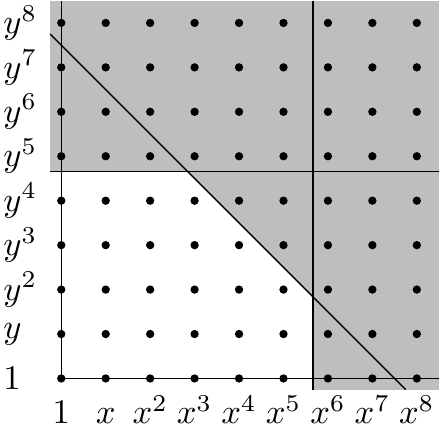}
  \caption{Forbidden monomials for the Generalized Alon--Füredi
    Theorem, for 
    $d_1=5, d_2=4, d=7$.
For an example with $|S_1|=|S_2|=8$,
the optimal value $N=y_1y_2=18$ is achieved by $(y_1,y_2)=(3,6)$.
}    
  \label{fig:alon-fueredi}
\end{figure}

Figure~\ref{fig:alon-fueredi} illustrates the assumptions. They
combine the constraints of Figure \ref{fig-partial-degree}
and~\ref{fig-total-degree}.

\begin{proof}
  The theorem 
  can be derived from
Lemma~\ref{main-quantitative}.
The optimization problem \thetag{\ref{product}--\ref{sum}}
can be interpreted as
looking for a lexicographically largest monomial
$x_1^{e_1}x_2^{e_2}\ldots x_n^{e_n}$ that is consistent with the
assumptions of the theorem and for which
Lemma~\ref{main-quantitative} gives the weakest bound.

To start the formal proof, note 
first that the optimum value $N$ of
\thetag{\ref{product}--\ref{sum}}
does not change if we turn \eqref{sum} into an inequality:
\begin{displaymath}
  \sum_{i=1}^n y_i \ge |S_1|+\cdots+|S_n|-d
  \leqno \thetag{\ref{sum}$'$}
\end{displaymath}
This is easily seen as follows:
Take a solution $(y_1,\dots,y_n)$ satisfying
\eqref{range} and~\thetag{\ref{sum}$'$}.
The assumptions of the theorem imply
$d\le \sum_{i=1}^n d_i$.
Therefore, as long as
the inequality \thetag{\ref{sum}$'$} is strict,
one can always find a variable $y_i$ that is not at its lower
bound, i.e., $y_i>|S_i|-d_i$. We can therefore reduce this variable,
reducing the product $y_1\ldots y_n$.

The proof is now straightforward:
Let
$x_1^{e_1}x_2^{e_2}\ldots x_n^{e_n}$ be the lexicographically largest
monomial of $f$. By the assumptions on~$f$, $e_i\le d_i$ and $\sum_{i=1}^n e_i \le
d$.
Hence, the quantities $y_i:= |S_i|-e_i$ satisfy the constraints~\eqref{range}
\begin{displaymath}
  |S_i|-d_i\le y_i\le |S_i|,
\end{displaymath}
and the constraint~\thetag{\ref{sum}$'$}:
\begin{displaymath}
  \sum y_i \ge |S_1|+\cdots+|S_n|-\sum e_i \ge |S_1|+\cdots+|S_n|-d
\end{displaymath}
By
Lemma~\ref{main-quantitative}, the number of nonzeros is at least
\begin{displaymath}
 (|S_1|-e_1)(|S_2|-e_2)\dots(|S_n|-e_n) = y_1y_2\ldots y_n, 
\end{displaymath}
which 
is at least the minimum value $N$ of
\thetag{\ref{product}} under
\eqref{range} and~\thetag{\ref{sum}$'$}.
\end{proof}

{Bishnoi et al.~
  \cite{bishnoi_2018}} proved Theorem~\ref{gen-AF} directly by induction on
$n$.
They showed that
the bound
is tight for all combinations of values $d$, $d_i$ and $|S_i|$ to which the
theorem applies.
They also derived 
the Generalized DeMillo--Lipton--Zippel Theorem
(Corollary~\ref{generalized-demillo-lipton-zippel}) from it.

In the (original) Alon--Füredi Theorem
\cite[Theorem~5]{ALON-FUEREDI-1993},
 the degrees $d_i$ in the individual variables are not
 constrained,
 and there is an 
 important difference: 
It is
\emph{assumed} that $f$ has at least one
nonzero on $S_1\times S_2 \times \dots \times S_n$.
Because of this 
assumption, the
Alon--Füredi Theorem is not a straightforward 
corollary of
the Generalized Alon--Füredi Theorem, see
\cite[Sections 2.2--2.3]{bishnoi_2018}.
In the constraints defining the bound $N$, the lower bound in
\eqref{range} is replaced by $y_i\ge 1$. 
As a consequence,
in contrast to
Theorem~\ref{gen-AF},
it is easy to solve the optimization problem: Starting from the lower
bound $y_1=\dots =y_n=1$, consider the variables $y_i$ in
order of decreasing sizes $|S_i|$ and
greedily 
enlarge each $y_i$ value to its upper bound $|S_i|$ until \eqref{sum} is fulfilled.

\section{Weaker quantitative conclusions for a maximal monomial}
\label{sec:weak-conclusion}

We have seen in Section~\ref{sec:not} that for
a maximal monomial, or even for
a monomial
of largest total degree, the
Quantitative Conclusion in the form~\eqref{bound-equal}
does not hold.   Can we still say something about
the number of nonzeros beyond the fact that it is at
least~1, which is the trivial consequence of the Existence Conclusion?

\subsection{Additive increase of the bound}
\label{sec:additive}

A very weak quantitative conclusion is given by the following statement. 
\begin{proposition}
\label{knuth-weak}
If
$x_1^{d_1}x_2^{d_2}\ldots x_n^{d_n}$ is a maximal monomial,
then
the number of nonzeros over the grid $S_1\times\dots\times S_n$,
with $|S_i|>d_i$ for all $i$, is at least
\begin{displaymath}
  1+ \bigl(|S_1|-(d_1+1)\bigr)+
  \bigl(|S_2|-(d_2+1)\bigr)+ \cdots +
  \bigl(|S_n|-(d_n+1)\bigr)
  .
\end{displaymath}
\end{proposition}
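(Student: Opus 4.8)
The plan is to reduce to polynomials whose degree in each variable is already below $|S_i|$, and then run an induction on the number of variables in which one peels off a single variable at a time. \emph{Reduction:} replace $f$ by the trimmed polynomial $\hat f$ of Proposition~\ref{proposition-trimming}. This leaves the values on $S_1\times\dots\times S_n$, hence the number of nonzeros, unchanged, and it keeps $x_1^{d_1}\cdots x_n^{d_n}$ maximal: by part~\ref{untouched} of Proposition~\ref{proposition-trimming} its coefficient stays nonzero, and trimming a monomial only creates monomials that are coordinatewise $\le$ an already present one, so it can never produce a monomial lying strictly above $x_1^{d_1}\cdots x_n^{d_n}$. Thus we may assume $\deg_{x_i}f<|S_i|$ for all $i$.

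\emph{Base cases and set-up.} We induct on $n$, with a secondary induction on $d_1+\dots+d_n$. If $n=1$, maximality of $x_1^{d_1}$ forces $\deg f=d_1$, so $f$ has at least $|S_1|-d_1=1+(|S_1|-d_1-1)$ nonzeros. If all $d_i=0$, then $x_1^0\cdots x_n^0=1$ being maximal means $f$ is a nonzero constant, and $\prod_i|S_i|\ge 1+\sum_i(|S_i|-1)$ since $\prod(1+a_i)\ge 1+\sum a_i$ for $a_i\ge 0$. So assume $n\ge 2$ and, after reordering the variables, $d_n\ge 1$. Write $f=\sum_{j\ge 0}x_n^{\,j}g_j(x_1,\dots,x_{n-1})$. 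Maximality implies that no $g_j$ with $j>d_n$ contains $x_1^{d_1}\cdots x_{n-1}^{d_{n-1}}$, while this monomial occurs in $g_{d_n}$ with the nonzero coefficient of $x_1^{d_1}\cdots x_n^{d_n}$; hence the coefficient of $x_1^{d_1}\cdots x_{n-1}^{d_{n-1}}$ in the slice $f_v:=f(x_1,\dots,x_{n-1},v)=\sum_j v^j g_j$ is a polynomial in $v$ of degree exactly $d_n$, so it is nonzero — and in particular $f_v$ then contains $x_1^{d_1}\cdots x_{n-1}^{d_{n-1}}$ — for all but at most $d_n$ values $v\in S_n$.

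\emph{Easy case.} If some slice $f_{v_0}$ is identically zero, then $x_n-v_0$ divides $f$, say $f=(x_n-v_0)q$; exactly as in the proof of Lemma~\ref{stark} one checks that $x_1^{d_1}\cdots x_{n-1}^{d_{n-1}}x_n^{d_n-1}$ is a maximal monomial of $q$. Since $(x_n-v_0)q$ is nonzero on the grid precisely where $x_n\ne v_0$ and $q\ne 0$, the number of nonzeros of $f$ equals that of $q$ on $S_1\times\dots\times S_{n-1}\times(S_n\setminus\{v_0\})$, and the secondary induction applied to $q$ over these sets yields $1+\sum_{i<n}(|S_i|-d_i-1)+(|S_n|-1-(d_n-1)-1)=1+\sum_{i\le n}(|S_i|-d_i-1)$, as wanted.

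\emph{Hard case and main obstacle.} Otherwise every slice $f_v$ ($v\in S_n$) is a nonzero polynomial, so — choosing any maximal monomial $x_1^{e_1}\cdots x_{n-1}^{e_{n-1}}$ of $f_v$ (its exponents satisfy $e_i\le\deg_{x_i}f<|S_i|$) and applying the Proposition inductively in $n-1$ variables — $f_v$ has at least $1+\sum_{i<n}(|S_i|-e_i-1)\ge 1$ nonzeros, and summing over $v\in S_n$ bounds the total. The crux is to show that this sum is at least $1+\sum_{i\le n}(|S_i|-d_i-1)$. The naive estimate ``each of the $\ge|S_n|-d_n$ nontrivial slices contributes $\ge 1$'' only gives $|S_n|-d_n$, which is too small; the deficit $\sum_{i<n}(|S_i|-d_i-1)$ must be supplied by slices whose maximal monomial is close to $(d_1,\dots,d_{n-1})$, and this is precisely what is \emph{not} automatic — a slice can have a maximal monomial with exponents far above the $d_i$ (as for $f=x_1x_2+x_1^{N}+x_2^{N}$), so that its inductive bound collapses to $1$. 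I would attack this by choosing the distinguished variable $x_n$ so that $f$ has no ``$n$‑bad'' monomial, i.e.\ none with $x_n$‑degree $<d_n$ but $x_i$‑degree $\ge d_i$ for all $i<n$: then $x_1^{d_1}\cdots x_{n-1}^{d_{n-1}}$ stays maximal in every slice containing it, one such slice contributes $1+\sum_{i<n}(|S_i|-d_i-1)$ nonzeros, and the remaining $\ge|S_n|-d_n-1$ nontrivial slices contribute one each in pairwise distinct slices, giving the bound. When no variable avoids such a monomial a genuinely finer argument is needed — combining the $x_n$-slicing with slicing in another coordinate, or strengthening the inductive statement to also track how the nonzeros are spread over the coordinates — and making this last step robust is, I expect, where the real work of the proof lies.
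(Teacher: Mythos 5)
Your proof is not complete, and you say so yourself: the ``hard case'' of your induction on $n$ is left open. The gap is genuine. After slicing at $x_n=v$, the maximal monomial of the slice $f_v$ can have exponents far above $(d_1,\dots,d_{n-1})$ (your own example $x_1x_2+x_1^N+x_2^N$), so the inductive bound for that slice collapses to $1$, and summing $1$ over the $\ge|S_n|-d_n$ nontrivial slices falls short by $\sum_{i<n}(|S_i|-d_i-1)$. Your proposed repair (choose a variable $x_n$ for which $f$ has no monomial with $x_n$-degree $<d_n$ but $x_i$-degree $\ge d_i$ for $i<n$) need not be available, and you do not supply the ``genuinely finer argument'' for that situation. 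The preliminary steps (trimming preserves maximality, the divisible case $f=(x_n-v_0)q$) are fine, but they do not rescue the induction.

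The missing idea is to avoid slicing altogether and argue globally from Lemma~\ref{main-existence}: its hypothesis involves only the monomials of $f$ and the sizes $|S_i|$, so it is inherited by every subgrid $S_1'\times\dots\times S_n'$ with $S_i'\subseteq S_i$ and $|S_i'|\ge d_i+1$ --- shrinking the grid does not change the polynomial, which is exactly what slicing destroys. Now observe that any single nonzero $(x_1,\dots,x_n)$ can be eliminated by deleting the value $x_j$ from $S_j$ for an arbitrary $j$ (possibly eliminating further nonzeros as well), and that the total number of deletions one can perform while keeping every $|S_j|\ge d_j+1$ is exactly $\sum_j\bigl(|S_j|-(d_j+1)\bigr)$. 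If $f$ had at most $\sum_j\bigl(|S_j|-(d_j+1)\bigr)$ nonzeros, one could repeatedly pick a surviving nonzero and delete one of its coordinates from a set that is still above its lower bound; since each deletion removes at least one surviving nonzero, the budget suffices to kill them all, and one ends with a grid satisfying $|S_j|\ge d_j+1$ on which $f$ vanishes identically, contradicting Lemma~\ref{main-existence}. This one-paragraph counting argument is the paper's proof of Proposition~\ref{knuth-weak}; it needs neither trimming nor any induction on the number of variables.
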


In other words, at each step of increasing
$|S_i|$ above the lower bound $d_i+1$ that is necessary for the
Existence Conclusion, the guaranteed number
of nonzeros increases by~1.

For example, with
$(d_1,d_2)=(1,1)$ and $|S_1|=|S_2|=3$, we conclude that there must be
at least 3 nonzeros.
Thus,
the ellipse example of Section~\ref{sec:not} cannot be improved by
choosing
a different grid $S_1\times S_2$ of the same size.

A version of
Proposition~\ref{knuth-weak}
 was stated
in 2022  by Knuth 
for the restricted case that
$x_1^{d_1}x_2^{d_2}\ldots x_n^{d_n}$ is a monomial of largest total
degree 
\cite[Ex.~MPR--114, p.~23, answer on p.~388]{knuth4B}.
 The proof goes through without changes when
$x_1^{d_1}x_2^{d_2}\ldots x_n^{d_n}$ is a maximal monomial and we base
the argument on
Lemma~\ref{main-existence}
instead of Corollary~\ref{alon}.

\begin{proof}[Proof of Proposition~\ref{knuth-weak}]
  We can eliminate any chosen nonzero $(x_1,\dots,x_n)$
  from $ S_1\times S_2 \times \dots \times S_n$
  by removing $x_j$ from $S_j$, for an arbitrary $j$.
  (This may eliminate additional nonzeros.)
  
  Thus, if there were fewer than the claimed number of nonzeros, we could
  eliminate them by successively removing an element from some $S_j$
  while keeping $|S_j|\ge d_j+1$.
Eventually we would arrive at a grid on which $f$ is identically zero, contradicting
Lemma~\ref{main-existence}.  
\end{proof}




\subsection{Hypergraph model}
Stronger asymptotic bounds can be obtained by using tools from
extremal combinatorics.
It is natural to associate an
$n$-partite $n$-uniform
hypergraph to the zeros 
of an $n$-variate polynomial
over a grid $S_1\times\dots\times S_n$:
The hypergraph contains the hyperedge
$(x_1,\ldots,x_n)$
whenever
$f(x_1,\ldots,x_n)=0$.
The Existence Conclusion then says
that the hypergraph contains no complete subhypergraph
$K^{(r)}(d_1+1,\dots,d_n+1)$.
%
What does this last statement alone (without regarding the
algebraic origin of the hypergraph) implies about
the number of nonzeros in
$S_1\times\dots\times S_n$. This is a question from extremal
(hyper-)graph theory.

We can apply the following result of
Erd\H os from 1964~\cite[Corollary, p.~188]{Erdoes1964OnEP}.

\begin{proposition}
  Consider the family of
  $n$-partite $n$-uniform hypergraphs
  that contain no
  complete $K^{(n)}(l,\dots,l)$, for some $l\ge 2$.
  
  Then there is a threshold $s_0(n,l)$ such that
  in every hypergraph of the family with at least $s$ vertices in each
  color class, for
  $s>s_0(n,l)$, the edge density is at most
  \begin{equation}
    \label{eq:hypergraph}
  (3n)^n \bigm/ 
  s^{1/l^{n-1}}
.    
  \end{equation}
\end{proposition}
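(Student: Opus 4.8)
The plan is to deduce the claimed edge‑density bound for $n$-partite $n$-uniform hypergraphs from the more classical Kővári–Sós–Turán type result of Erdős for \emph{ordinary} (non‑partite) $l$-uniform hypergraphs. Recall Erdős's original statement: there is a constant $c(n,l)$ such that any $n$-uniform hypergraph on $N$ vertices with more than $c(n,l)\,N^{\,n-1/l^{n-1}}$ edges contains a complete $n$-partite $n$-uniform subhypergraph $K^{(n)}(l,\dots,l)$ (equivalently, $l$ disjoint vertex sets, each of size $l$, with all $l^n$ transversal $n$-tuples present as edges). The first step is to pass from an $n$-partite hypergraph $H$ with color classes $V_1,\dots,V_n$, each of size (at least) $s$, to a single $n$-uniform hypergraph on the disjoint union $V=V_1\cup\cdots\cup V_n$, with $N=|V|\le ns$ vertices, keeping exactly the edges of $H$. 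If $H$ is $K^{(n)}(l,\dots,l)$-free \emph{as a partite hypergraph}, one must check that the associated ordinary hypergraph is $K^{(n)}(l,\dots,l)$-free too; but an ordinary copy of $K^{(n)}(l,\dots,l)$ has its $n$ parts scattered among the color classes, and since every edge of $H$ is a transversal of the partition, a pigeonhole argument forces each of the $n$ parts of the copy to lie within a single color class and moreover to lie in \emph{distinct} color classes, producing a genuine partite copy — contradiction. Hence the ordinary hypergraph is $K^{(n)}(l,\dots,l)$-free.

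Next I would apply Erdős's bound to this ordinary hypergraph on $N\le ns$ vertices, so that the number of edges of $H$ is at most $c(n,l)\,(ns)^{\,n-1/l^{n-1}}$. Dividing by the total number $s^n$ of transversal $n$-tuples (or by $|V_1|\cdots|V_n|\le s^n$, using $|V_i|\ge s$) gives an edge density at most
\begin{displaymath}
  c(n,l)\,\frac{(ns)^{\,n-1/l^{n-1}}}{s^n}
  = c(n,l)\,n^{\,n-1/l^{n-1}}\, s^{-1/l^{n-1}}
  \le c(n,l)\,n^{n}\, s^{-1/l^{n-1}}.
\end{displaymath}
It then remains to control the constant: one wants $c(n,l)\,n^n\le (3n)^n$, i.e.\ $c(n,l)\le 3^n$. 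Erdős's argument gives an explicit (and rather small) value of $c(n,l)$ — in the standard Kővári–Sós–Turán induction the constant can be taken to be an absolute-constant-power of $n$, comfortably below $3^n$ once $s$ is large — so absorbing it into the factor $3^n$ is harmless; any slack is swept into the threshold $s_0(n,l)$. The threshold $s_0(n,l)$ is chosen large enough that Erdős's theorem applies with $N=ns$ and also large enough to absorb lower-order terms hidden in the passage from ``more than'' to ``at most''.

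The main obstacle I expect is not the asymptotics but the bookkeeping in the constant and in the reduction: one has to be careful that a complete $l$-uniform configuration found by the non-partite theorem really does respect the color classes (the pigeonhole step above), and one has to verify that the constant $c(n,l)$ supplied by the classical proof is genuinely at most $3^n$ — or else restate the bound with a larger but still $s$-independent constant and note that the precise value of the constant is immaterial for the intended application to zeros of polynomials. Since the statement is quoted from Erdős's paper, the honest and economical route in the survey is to cite \cite{Erdoes1964OnEP} directly for the non-partite version and spell out only the short partite-to-ordinary reduction, pointing out that the slightly wasteful constant $(3n)^n$ is chosen so that the reduction goes through without further fuss.
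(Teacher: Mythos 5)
Your reduction to the non-partite theorem has a genuine gap at the step where you set $N=|V|\le ns$. The hypothesis is only that every colour class has \emph{at least} $s$ vertices, so $N=|V_1|+\cdots+|V_n|\ge ns$, and $N$ is not controlled by $s$ at all. Consequently the computation that bounds the number of edges by $c(n,l)(ns)^{\,n-1/l^{n-1}}$ and divides by $s^n$ is valid only in the balanced case $|V_1|=\cdots=|V_n|=s$. For unbalanced classes it fails: take $|V_1|=M\gg s$ and $|V_2|=\cdots=|V_n|=s$; then the non-partite theorem only bounds the number of edges by roughly $M^{\,n-1/l^{n-1}}$, while the denominator is $Ms^{\,n-1}$, so your argument yields a density bound of order $(M/s)^{\,n-1}M^{-1/l^{n-1}}$, which tends to infinity with $M$ instead of staying below $(3n)^n/s^{1/l^{n-1}}$. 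The missing idea is easy to supply: after proving the balanced case, pick uniformly random $s$-element subsets $W_i\subseteq V_i$; the induced subhypergraph is still free of $K^{(n)}(l,\dots,l)$, has exactly $s$ vertices in each class, and its expected edge density equals the density of the original hypergraph (each edge survives with probability $\prod_i s/|V_i|$), so the balanced bound transfers to the general case by averaging.

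Apart from this, the reduction is sound: the pigeonhole argument showing that any ordinary copy of $K^{(n)}(l,\dots,l)$ must have each of its $n$ parts contained in a single colour class, with distinct parts in distinct classes, is correct; and the constant causes no trouble, since Erdős's non-partite theorem holds with constant $1$ for large $N$ (more than $N^{\,n-1/l^{n-1}}$ edges force a copy), leaving ample slack below $(3n)^n$. Be aware, though, that the paper proves nothing here: the Corollary on p.~188 of \cite{Erdoes1964OnEP} is already stated for $n$-partite hypergraphs with at least $s$ vertices in each colour class ($r$-partite with at least $n$ per class, in Erdős's notation), so the proposition is quoted verbatim up to renaming $r,n$ as $n,s$, and no partite-to-ordinary reduction is needed at all. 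If you do want to include a self-contained derivation from the non-partite Theorem~1 of \cite{Erdoes1964OnEP}, the sampling step above is the one thing you must add.
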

(In the original statement in \cite{Erdoes1964OnEP}, our $n$ is denoted by $r$, which
adheres better to the conventions of hypergraphs, and our $s$ is
denoted by $n$.)



We translate this to our setting:
If $x_1^{d_1}x_2^{d_2}\ldots x_n^{d_n}$ is a maximal
monomial of~$f$,
Lemma~\ref{stark} implies
that the hypergraph corresponding
to the zeros does not
contain a complete $K^{(r)}(l,\dots,l)$, with
$l=1+\max \{d_1,\ldots, d_n\}$.
We conclude that the density of zeros
in $S_1\times S_2 \times \dots \times S_n$
is bounded
by~\eqref{eq:hypergraph}
if $s := \min \{|S_1|,\ldots, |S_n|\}$ is big enough.
This is good enough for the property that is essential for
the applications: 
The probability of hitting a zero goes to~0 as the size of all sets $S_i$ is
increased.
However, the
convergence is very slow.

\subsection{Bivariate polynomials}
\label{sec:bivariate}

For a polynomial of $n=2$ variables, we are in the setting of bipartite
\emph{graphs}, where the
classic result of K\H ov\'ari, S\'os, and Tur\'an~\cite{KST}
applies.
In particular, if 
 $x_1^{d_1}x_2^{d_2}$ is a maximal monomial, then
the bipartite graph with
$|S_1|+|S_2|$ vertices that models the zeros on $S_1\times S_2$ contains no complete bipartite subgraph
$K_{d_1+1,d_2+1}$.
Assuming $s=|S_1|=|S_2|$, we conclude
from the K\H ov\'ari--S\'os--Tur\'an Theorem
that
such a graph has at most $O(s^{2-1/l})$ edges, where
$l=\min\{d_1,d_2\}+1$.
Note that, in contrast to the case of hypergraphs above, we use
$\min\{d_1,d_2\}$ and not $\max$.
Hence the density of zeros is
\begin{displaymath}
  O(1/
\sqrt[l]s
  ).
\end{displaymath}
The bound of the
K\H ov\'ari--S\'os--Tur\'an Theorem is known to be tight for several small values
of $l$ in the combinatorial
setting, where all we know is that that
the bipartite subgraph
$K_{d_1+1,d_2+1}$ is forbidden.
This completely ignores the origin of the problem from the
polynomial~$f$. Can a polynomial with such a large fraction
$\Theta(1/s^{1/l})$ of zeros  on an $s\times s$ grid be constructed?

\subsection{A puzzle}

The first nontrivial example is
$(d_1,d_2)=(1,1)$, i.e., $xy$ 
should be a maximal monomial.
Such a polynomial, after suitable scaling, has the form
\begin{equation} \label{f-1-1}
  f(x,y)=-xy + P(x) + Q(y),
\end{equation}
where $P(x)$ and $Q(y)$ are polynomials of arbitrarily high degree.

Let us denote the elements that we substitute for $x$ by
$S_1=\{a_1,\ldots,a_s\}$, with distinct elements $a_i$,
and similarly for the values
$S_2=\{b_1,\ldots,b_s\}$ that we substitute for~$y$.
Let $c_i=P(a_i)$ and $d_j=Q(b_j)$ be the corresponding values of the
polynomials. Then the zeros of $f$
on $S_1\times S_2$ are the
index pairs $(i,j)$ with
  \begin{displaymath}
    a_ib_j = u_i+v_j \qquad (1\le i,j\le s).
  \end{displaymath}
We can thus reformulate our question as follows:

\begin{problem}
  \label{tables}
  Let $s$ be fixed.
  
Find two 
  sequences of 
  $a_1,\ldots, a_s$ and
  $b_1,\ldots, b_s$ of distinct numbers,
  and two
  sequences
  $u_1,\ldots, u_s$ and
  $v_1,\ldots, v_s$ of 
  not necessarily distinct numbers,
such that
  the multiplication table
  of the first two sequences agrees with the addition table of the last
  two sequences in as many positions $(i,j)$ as possible: 
  \begin{displaymath}
    a_ib_j = u_i+v_j
  \end{displaymath}
\end{problem}

For example,
the following multiplication and addition tables,
which are derived from the ellipse example of Section~\ref{sec:not},
 have 6 coinciding entries:

\begin{displaymath}
  \begin{tabular}{|r|rrr|}
    \hline
    $\times$&1&3&5\\
    \hline
    6&6&18&30\\
    7&7&21&35\\
    8&8&24&40\\\hline
  \end{tabular}
  \qquad  \text{ and }
  \qquad
  \begin{tabular}{|r|rrr|}
    \hline
    $+$&1&17&29\\
    \hline
    1&2&18&30\\
    6&7&23&35\\
    7&8&24&36\\
    \hline
  \end{tabular}
  \medskip
\end{displaymath}


The question has now become a problem of additive combinatorics.
It is clear that
Problem~\ref{tables} is 
not more restricted than asking for the zeros of~\eqref{f-1-1}:
We can find an interpolating polynomial $P$ and $Q$ for
any values $a_i$ and $c_i$, or $b_i$ and $d_i$, respectively,
since the degree of $P$ and $Q$ is not bounded.

As discussed above, the bipartite graph that models the zeros of~$f$
contains no $K_{2,2}$; this can also be shown directly from the
definition of an addition and multiplication table. Hence the number of
zeros is $O(s^{3/2})$.
Can this bound be achieved, asymptotically, or does the 
algebra
imply a sharper upper bound? Is there a
construction with a superlinear number of zeros?

\section{What's in a name?}

In the late 1970's, the first randomized primality tests
were discovered.
Randomized algorithms were gaining popularity,
and their usefulness 
was recognized.
It is thus no coincidence that various forms of
the Schwartz--Zippel Lemma were discovered independently,
as the topic was ``in the air''.
The papers of Schwartz and Zippel were even presented at the same
conference in 1979 and published back to back in the proceedings
volume \cite{s-pavpi-79,z-pasp-79}.

The name \emph{Schwartz--Zippel Lemma} stuck, despite
the accumulation of sibilant consonants, and
despite the priority
of DeMillo and Lipton~\cite{dl-prapt-78}.
A blog post of Richard Lipton\footnote
{
  \url{https://rjlipton.wpcomstaging.com/2009/11/30/the-curious-history-of-the-schwartz-zippel-lemma/}}
from 2009
proposed various possible reasons for this fact.
We add to this discussion by speculating that the
poor typesetting quality of the
\emph{Information Processing Letters} at the time may have
contributed to the fact that the paper~\cite{dl-prapt-78} was not sufficiently received.
In addition, the
quirk with the capital letter in the middle of the family name might have
caused some insecurity and uneasiness.
In the title of this note, we honor the tradition of omitting DeMillo
and Lipton.

 We have seen that Lasoń's generalization of Alon's Combinatorial
Nullstellensatz was predated by an exercise in a textbook, but he must
be nevertheless credited for bringing the 
statement of
Lemma~\ref{main-existence} to the published journal literature.
The major reason for including
his name is 
the rhyme.

\bibliographystyle{plainurl}
\bibliography{nullstellensatz}

\begin{thebibliography}{10}

\bibitem{a-cn-99}
Noga Alon.
\newblock Combinatorial {Nullstellensatz}.
\newblock {\em Combinatorics, Probability and Computing}, 8(1\&2):7--29, 1999.
\newblock \href {https://doi.org/10.1017/S0963548398003411}
  {\path{doi:10.1017/S0963548398003411}}.

\bibitem{ALON-FUEREDI-1993}
Noga Alon and Zoltán Füredi.
\newblock Covering the cube by affine hyperplanes.
\newblock {\em European Journal of Combinatorics}, 14(2):79--83, 1993.
\newblock \href {https://doi.org/10.1006/eujc.1993.1011}
  {\path{doi:10.1006/eujc.1993.1011}}.

\bibitem{bishnoi_2018}
Anurag Bishnoi, Pete~L. Clark, Aditya Potukuchi, and John~R. Schmitt.
\newblock On zeros of a polynomial in a finite grid.
\newblock {\em Combinatorics, Probability and Computing}, 27(3):310--333, 2018.
\newblock \href {http://arxiv.org/abs/1508.06020} {\path{arXiv:1508.06020}},
  \href {https://doi.org/10.1017/S0963548317000566}
  {\path{doi:10.1017/S0963548317000566}}.

\bibitem{clark-14}
Pete~L. Clark.
\newblock The {Combinatorial Nullstellens\"atze} revisited.
\newblock {\em The Electronic Journal of Combinatorics}, 21(\#P4.15):1--17,
  2014.
\newblock \href {https://doi.org/10.37236/4359} {\path{doi:10.37236/4359}}.

\bibitem{dl-prapt-78}
Richard~A. DeMillo and Richard~J. Lipton.
\newblock A probabilistic remark on algebraic program testing.
\newblock {\em Information Processing Letters}, 7(4):193--195, June 1978.
\newblock \href {https://doi.org/10.1016/0020-0190(78)90067-4}
  {\path{doi:10.1016/0020-0190(78)90067-4}}.

\bibitem{multivariate-SZ-22}
M.~Levent Do\u{g}an, Alperen~A. Erg\"{u}r, Jake~D. Mundo, and Elias Tsigaridas.
\newblock The multivariate {Schwartz--Zippel} lemma.
\newblock {\em SIAM Journal on Discrete Mathematics}, 36(2):888--910, 2022.
\newblock \href {http://arxiv.org/abs/1910.01095} {\path{arXiv:1910.01095}},
  \href {https://doi.org/10.1137/20M1333869} {\path{doi:10.1137/20M1333869}}.

\bibitem{Erdoes1964OnEP}
Paul Erd{\"o}s.
\newblock On extremal problems of graphs and generalized graphs.
\newblock {\em Israel Journal of Mathematics}, 2:183--190, 1964.
\newblock \href {https://doi.org/10.1007/BF02759942}
  {\path{doi:10.1007/BF02759942}}.

\bibitem{KST}
T.~K\H{o}v\'ari, V.~T. S\'os, and P.~Tur\'an.
\newblock On a problem of {K. Zarankiewicz}.
\newblock {\em Colloquium Mathematicum}, 3:50--57, 1954.
\newblock \href {https://doi.org/10.4064/cm-3-1-50-57}
  {\path{doi:10.4064/cm-3-1-50-57}}.

\bibitem{knuth2-1}
Donald~E. Knuth.
\newblock {\em The Art of Computer Programming, Volume 2: Seminumerical
  Algorithms}.
\newblock Addison-Wesley, 1st edition, 1969.

\bibitem{knuth2}
Donald~E. Knuth.
\newblock {\em The Art of Computer Programming, Volume 2: Seminumerical
  Algorithms}.
\newblock Addison-Wesley, 3rd edition, 1998.

\bibitem{knuth4B}
Donald~E. Knuth.
\newblock {\em The Art of Computer Programming, Volume 4B: Combinatorial
  Algorithms, Part 2}.
\newblock Addison-Wesley, 2022.

\bibitem{kouba-09}
Omran Kouba.
\newblock A duality based proof of the {Combinatorial Nullstellensatz}.
\newblock {\em The Electronic Journal of Combinatorics}, 16, Issue
  1(\#N9):1--3, 2009.
\newblock \href {https://doi.org/10.37236/247} {\path{doi:10.37236/247}}.

\bibitem{l-gcn-10}
Micha{\l} Laso\'n.
\newblock A generalization of {Combinatorial Nullstellensatz}.
\newblock {\em The Electronic Journal of Combinatorics}, 17(\#N32):1--6, 2010.
\newblock \href {https://doi.org/10.37236/481} {\path{doi:10.37236/481}}.

\bibitem{Lidl-Niederreiter}
Rudolf Lidl and Harald Niederreiter.
\newblock {\em Finite Fields}.
\newblock Cambridge University Press, USA, 1996.
\newblock \href {https://doi.org/10.1017/CBO9780511525926}
  {\path{doi:10.1017/CBO9780511525926}}.

\bibitem{michalek-AMM}
Mateusz Micha{\l}ek.
\newblock A short proof of {Combinatorial} {Nullstellensatz}.
\newblock {\em Amer. Math. Monthly}, 117(9):821--823, 2010.
\newblock \href {http://arxiv.org/abs/0904.4573} {\path{arXiv:0904.4573}},
  \href {https://doi.org/0.4169/000298910X521689}
  {\path{doi:0.4169/000298910X521689}}.

\bibitem{mr-ra-95}
R.~Motwani and P.~Raghavan.
\newblock {\em Randomized Algorithms}.
\newblock Cambridge University Press, New York, NY, 1995.

\bibitem{ore22}
{\O}ystein Ore.
\newblock {\em \"Uber h\"ohere {Kongruenzen}}, volume~7 of {\em Norsk Mat.
  Forenings Skrifter Ser. I}.
\newblock Norsk Matematisk Forening, 1922.
\newblock 15 pp.

\bibitem{schauz-08}
Uwe Schauz.
\newblock Algebraically solvable problems: Describing polynomials as equivalent
  to explicit solutions.
\newblock {\em The Electronic Journal of Combinatorics}, 15(\#R10):1--35, 2008.
\newblock \href {https://doi.org/10.37236/734} {\path{doi:10.37236/734}}.

\bibitem{s-pavpi-79}
Jacob~T. Schwartz.
\newblock Probabilistic algorithms for verification of polynomial identities.
\newblock In Edward~W. Ng, editor, {\em EUROSAM '79, Proceedings of the
  International Symposium on Symbolic and Algebraic Computation}, volume~72 of
  {\em Lecture Notes in Computer Science}, pages 200--215, Berlin, Heidelberg,
  1979. Springer-Verlag.
\newblock \href {https://doi.org/10.1007/3-540-09519-5_72}
  {\path{doi:10.1007/3-540-09519-5_72}}.

\bibitem{s-fpavp-80}
Jacob~T. Schwartz.
\newblock Fast probabilistic algorithms for verification of polynomial
  identities.
\newblock {\em J.~Assoc. Comput. Mach.}, 27(4):701--717, 1980.
\newblock \href {https://doi.org/10.1145/322217.322225}
  {\path{doi:10.1145/322217.322225}}.

\bibitem{tv-ac-06}
Terence Tao and Van~H. Wu.
\newblock {\em Additive Combinatorics}.
\newblock Cambridge University Press, 2006.

\bibitem{z-pasp-79}
Richard Zippel.
\newblock Probabilistic algorithms for sparse polynomials.
\newblock In Edward~W. Ng, editor, {\em EUROSAM '79, Proceedings of the
  International Symposium on Symbolic and Algebraic Computation}, volume~72 of
  {\em Lecture Notes in Computer Science}, pages 216--226. Springer-Verlag,
  1979.
\newblock \href {https://doi.org/10.1007/3-540-09519-5_73}
  {\path{doi:10.1007/3-540-09519-5_73}}.

\end{thebibliography}

\appendix
\section{
  Proof of Lemma \ref{stark} via the coefficient formula}
\label{proof-tao-vu}

This proof follows the hint of Tao and Vu~\cite[Exercise~9.1.4,
p.~332]{tv-ac-06}
and works out their exercise, see also
Laso\'n~{\cite[Section~3]{l-gcn-10}}.
Essentially the same proof, for 
the original
Combinatorial Nullstellensatz (Corollary~\ref{alon}), was given by
Kouba~\cite{kouba-09} in 2009.

As an intermediate result, we get a formula
\eqref{eq:F-tilde} for the coefficient of
$x_1^{d_1}x_2^{d_2}\ldots x_n^{d_n}$ 
in terms of the values of $f$ on
$S_1\times S_2 \times\dots \times S_n$
(
the Coefficient Formula of
Laso\'n~{\cite[Theorem 3]{l-gcn-10}}).


We emphasize, that in contrast to other
statements in this note, the following proof supposes that
the coefficient ring 
is a field (and we call it~$\mathbb F$).

We start with a preparatory lemma:

\begin{lemma}
 \label{interpol}
  Let $\mathbb F$ be a field.
  For a finite nonempty set $S\subseteq \mathbb F$, 
  there is a 
 function
 $
 g_{S}\colon S\to \mathbb F$ with the
following property:
\begin{align}
\label{null}
&  \sum_{x\in S} g_S(x)x^k = 0,\text{ for $k=0,1,\dots,|S|-2$}\\
\label{eins}
&  \sum_{x\in S} g_S(x)x^k = 1,\text{ for $k=|S|-1$}
\end{align}
\end{lemma}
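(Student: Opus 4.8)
The plan is to recognize \eqref{null}--\eqref{eins} as a square linear system and solve it. Writing $S=\{s_1,\dots,s_m\}$ with $m=|S|\ge1$ and pairwise distinct $s_i$, and putting $g_i:=g_S(s_i)$, the $m$ equations for $k=0,1,\dots,m-1$ become a linear system in $g_1,\dots,g_m$ whose coefficient matrix is the Vandermonde matrix $(s_i^{\,k})_{0\le k\le m-1,\ 1\le i\le m}$. Its determinant $\prod_{1\le i<j\le m}(s_j-s_i)$ is nonzero since $\mathbb F$ is a field and the $s_i$ are distinct, so a (unique) solution exists; this already proves the lemma.

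For the use made of this lemma in the appendix it pays to exhibit the solution explicitly, so I would also verify that
$$ g_S(x)=\frac1{\prod_{y\in S\setminus\{x\}}(x-y)} $$
works (empty product $=1$ when $|S|=1$). The quickest route is Lagrange interpolation: any $p\in\mathbb F[x]$ with $\deg p\le m-1$ equals $\sum_{s\in S}p(s)\prod_{y\in S\setminus\{s\}}\frac{x-y}{s-y}$, and since each factor $\prod_{y\in S\setminus\{s\}}\frac{x-y}{s-y}$ has degree exactly $m-1$ with leading coefficient $g_S(s)$, comparing the coefficient of $x^{m-1}$ on both sides shows that the coefficient of $x^{m-1}$ in $p$ equals $\sum_{s\in S}p(s)\,g_S(s)$. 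Taking $p(x)=x^k$ for $0\le k\le m-1$ yields exactly the right-hand sides $0$ (for $k\le m-2$) and $1$ (for $k=m-1$) of \eqref{null}--\eqref{eins}.

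There is no genuine obstacle here. The only points to watch are the degenerate case $|S|=1$, where \eqref{null} is vacuous and \eqref{eins} just says $g_S(s_1)=1$ (consistent with the empty-product convention), and the observation that both the Vandermonde argument and the Lagrange formula require division — this is precisely why Lemma~\ref{interpol}, and downstream the coefficient formula, is stated only over a field, in contrast to the integral-domain results in the body of the paper.
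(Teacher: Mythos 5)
Your proposal is correct and matches the paper's proof: both solve the equations as a square linear system with a Vandermonde coefficient matrix (nonsingular since the elements of $S$ are distinct and $\mathbb F$ is a field) and record the same explicit solution $g_S(x)=1\bigm/\prod_{y\in S\setminus\{x\}}(x-y)$. Your Lagrange-interpolation verification of that formula is a small addition the paper only hints at, but the underlying argument is the same.
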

\begin{proof}
  The equations \thetag{\ref{null}--\ref{eins}} form a 
  system
  of $|S|$ linear equations in the $|S|$ unknowns $u_j=g_S(a_j)$ for
  $a_j\in S= \{a_1,a_2,\dots,a_{|S|}\}$.  The coefficient matrix is
  a Vandermonde matrix, and hence the system has a unique
  solution.
  (The situation is the same as in Lagrange interpolation,
  except that the coefficient matrix is transposed.)
  
  The solutions $u_j$ can actually be obtained explicitly
  as
  the quotient of two
  Vandermonde determinants:
  \begin{equation}
   \label{eq:multiplier}
    u_j =
   g_S
  (a_j) = 1 \Bigm/ \prod_{k\ne j}(a_j-a_k) \qedhere
\end{equation}
\end{proof}

\begin{proof}[Proof of Lemma \ref{stark}]
It is no loss of generality to assume $|S_i|=d_i+1$.
Take the functions $
g_{S_i}$ for $i=1,\ldots,n$,
and multiply them together:
\begin{equation}
 \label{eq:multipliers}
  \tilde g(x_1,\ldots,x_n) :=
g_{S_1}(x_1) g_{S_2}(x_2) \ldots g_{S_n}(x_n)
\end{equation}
Continuing to follow the suggested procedure of Tao and
Vu~\cite[Exercise 9.1.4]{tv-ac-06}, 
we consider the quantity
\begin{equation}
  \label{eq:F-tilde}
\tilde F :=
\sum_{x_1\in S_1}
\sum_{x_2\in S_2}
\cdots
\sum_{x_n\in S_n}
f(x_1,\ldots,x_n) 
\tilde g(x_1,\ldots,x_n),
\end{equation}
and we want to show that $\tilde F\ne 0$.
Let us see how the transformation
from $f$ to $\tilde F$ affects the
monomials $x_1^{a_1}\ldots x_n^{a_n}$ of~$f$:
\begin{align}
\nonumber 
&
\sum_{x_1\in S_1}
\sum_{x_2\in S_2}
\cdots
\sum_{x_n\in S_n}
x_1^{a_1}\ldots x_n^{a_n}  
g_{S_1}(x_1) g_{S_2}(x_2) \ldots g_{S_n}(x_n)
\\
&\qquad\label{eq:sum-g2}
=
\sum_{x_1\in S_1}
x_1^{a_1}g_{S_1}(x_1)
\cdot
\sum_{x_2\in S_2}
x_2^{a_2}g_{S_2}(x_2)
\cdots
\sum_{x_n\in S_n}
x_n^{a_n} g_{S_n}(x_n)
\end{align}
This expression vanishes 
whenever $a_i<d_i$ for some $i$, by~\eqref{null}.
The only monomial of~$f$ that is not annihilated 
in this way
is the maximal monomial $x_1^{d_1}x_2^{d_2}\ldots x_n^{d_n}$. For this
monomial, the term~\eqref{eq:sum-g2} becomes 1,
 by~\eqref{eins}.
 Therefore $\tilde F$
as given by \eqref{eq:F-tilde}
is equal to the coefficient of
 $x_1^{d_1}x_2^{d_2}\ldots x_n^{d_n}$ in~$f$,
 expressing it
 in terms of the values of $f$ on the grid
 $S_1\times S_2 \times\dots \times S_n$. 
 Accordingly, \eqref{eq:F-tilde}, in connection with
\eqref{eq:multiplier} and \eqref{eq:multipliers},
 is called the \emph{coefficient formula}.

By the assumption of Lemma~\ref{stark},
$x_1^{d_1}x_2^{d_2}\ldots x_n^{d_n}$ appears in $f$, and thus
its coefficient
 $\tilde F \ne 0$.
Therefore, by \eqref{eq:F-tilde}, there must be an
 $(x_1,x_2,\dots,x_n)\in S_1\times S_2 \times\dots \times S_n$
with
$f(x_1,\ldots,x_n) \ne 0$.
\end{proof}

The hint of Tao and Vu~\cite[Exercise~9.1.4]{tv-ac-06} actually suggests to prove a more general version of
Lemma~\ref{interpol}:
\begin{lemma}
  \label{interpol1}
For a set $S$ with $|S|>d$, there is a 
 function
 $
 g_{S,d}\colon S\to \mathbb R$ with the
following property:
\begin{displaymath}
  \sum_{x\in S} g_{S,d}(x)x^k =
  \begin{cases}
  0,&\text{ for $k=0,1,\dots,d-1$}\\
  1,&\text{ for $k=d$}   
  \end{cases}
\end{displaymath}
\end{lemma}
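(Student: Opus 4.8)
The plan is to mimic the proof of Lemma~\ref{interpol}, since Lemma~\ref{interpol1} is simply the natural generalization where the target index $d$ is allowed to be smaller than $|S|-1$. First I would pick any $(d+1)$-element subset $T\subseteq S$, say $T=\{a_1,\dots,a_{d+1}\}$, and apply Lemma~\ref{interpol} to $T$: this yields a function $g_T\colon T\to\mathbb F$ satisfying $\sum_{x\in T}g_T(x)x^k=0$ for $k=0,\dots,d-1$ and $\sum_{x\in T}g_T(x)x^d=1$. Then I would simply extend $g_T$ to all of $S$ by declaring $g_{S,d}(x)=g_T(x)$ for $x\in T$ and $g_{S,d}(x)=0$ for $x\in S\setminus T$. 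The sums over $S$ then coincide with the sums over $T$, so the required identities hold immediately.

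Alternatively — and this is the route the Tao--Vu hint seems to have in mind, which avoids any arbitrary choice of subset — one sets up the defining equations directly as a linear system in the $|S|$ unknowns $u_j=g_{S,d}(a_j)$, $a_j\in S$. This is a system of only $d+1$ equations (for $k=0,\dots,d$) in $|S|\ge d+1$ unknowns, with coefficient matrix the $(d+1)\times|S|$ rectangular Vandermonde matrix $(a_j^k)$. Since $d+1$ of the columns already form an invertible square Vandermonde matrix (the $a_j$ are distinct), this matrix has full row rank $d+1$, so the underdetermined system is consistent and has a solution. When $|S|=d+1$ the solution is unique and is given explicitly by formula~\eqref{eq:multiplier}, $g_{S,d}(a_j)=1/\prod_{k\ne j}(a_j-a_k)$; in general one gets a whole affine family of solutions, parametrized by the kernel of the Vandermonde matrix.

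Since there is no real obstacle here — the statement is strictly easier than Lemma~\ref{interpol}, not harder — I would expect the ``hard part'' to be purely expository: deciding whether to present the quick reduction to Lemma~\ref{interpol} (cleanest) or the self-contained rank argument (more in the spirit of the hint), and making clear that the coefficient field $\mathbb F$ (not $\mathbb R$, despite the codomain written in the statement) is all that is used, so that the downstream application in the proof of Lemma~\ref{stark} goes through with $d_i$ in place of $|S_i|-1$. I would go with the one-line reduction, remarking that when $|S|=d+1$ it specializes to Lemma~\ref{interpol}, and that the explicit multiplier formula~\eqref{eq:multiplier} is then available.

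\begin{proof}[Proof of Lemma~\ref{interpol1}]
Choose any subset $T=\{a_1,\dots,a_{d+1}\}\subseteq S$ of size $d+1$, which is possible since $|S|>d$. Apply Lemma~\ref{interpol} to $T$ (with $|T|=d+1$, so that $|T|-1=d$): there is a function $g_T\colon T\to\mathbb F$ with $\sum_{x\in T}g_T(x)x^k=0$ for $k=0,1,\dots,d-1$ and $\sum_{x\in T}g_T(x)x^d=1$. Now define $g_{S,d}\colon S\to\mathbb F$ by $g_{S,d}(x)=g_T(x)$ for $x\in T$ and $g_{S,d}(x)=0$ otherwise. Then for every $k$,
\begin{displaymath}
  \sum_{x\in S}g_{S,d}(x)x^k=\sum_{x\in T}g_T(x)x^k,
\end{displaymath}
so the claimed identities follow from those for $g_T$. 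When $|S|=d+1$ this construction forces $T=S$ and recovers Lemma~\ref{interpol}, and the values are given explicitly by~\eqref{eq:multiplier}.
\end{proof}
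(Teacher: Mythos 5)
Your proof is correct and is exactly the paper's argument: the paper derives Lemma~\ref{interpol1} by applying Lemma~\ref{interpol} to an arbitrary subset $S'\subseteq S$ of size $d+1$ and setting $g_{S,d}(x)=0$ for $x\notin S'$. Your extension-by-zero construction (and the remark that $|S|=d+1$ recovers Lemma~\ref{interpol} with the explicit formula~\eqref{eq:multiplier}) matches this word for word.
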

This can be derived by applying
Lemma~\ref{interpol} to an arbitrary subset $S'\subseteq S$ of size
$|S'|=d+1$ and setting
$g_{S,d}(x)=0$ for $x\notin S'$.
We have instead chosen to simplify the proof by assuming 
$|S|=d+1$.
%

Tao and Vu~\cite[Exercise~9.1.4]{tv-ac-06} formulate their exercise
``for a field whose characteristic is 0 or greater than $\max d_i$.''
I don't see how the
characteristic of the 
field comes into play. 

Since we are constructing some sort of interpolating
function $g$, which depends on solving a system of equations,
this proof depends on $\mathbb F$ being a field (or at least,
a ring in which all nonzero differences $a-a'$ for $a, a'\in S_i$ are
units).
Under some weaker algebraic conditions (see
Section~\ref{sec:weaker-algebra}), it is still true that the
coefficient of $x_1^{d_1}x_2^{d_2}\ldots x_n^{d_n}$ in $f$ is uniquely
determined by the values of $f$
at the points
$(x_1,x_2,\dots,x_n)\in S_1\times S_2 \times\dots \times S_n$
\cite[Statement~2.8(v)]{schauz-08},
see also~\cite{clark-14}.

\end{document}